% arara: clean: { extensions: [ aux, bbl, blg, fdb_latexmk, fls, log, out, synctex.gz, toc ]}

\documentclass[12pt,a4paper]{amsart}

% \Usepackages ============================================
% Standard et geometry ------------------------------------
\usepackage[utf8]{inputenc}
\usepackage[T1]{fontenc}
\usepackage[english]{babel}
\usepackage[margin=2cm]{geometry}
\usepackage{enumitem}
\usepackage{caption}
\usepackage{subcaption}

% Links and drawing ---------------------------------------
\usepackage[colorlinks]{hyperref}
\usepackage{tikz}

% Math ----------------------------------------------------
\usepackage{amsmath,amssymb,amsfonts,amsthm}
\usepackage{mathtools,mathrsfs}
\usepackage{stmaryrd}
\SetSymbolFont{stmry}{bold}{U}{stmry}{m}{n}
\usepackage[capitalize,nameinlink]{cleveref}
\raggedbottom{}
\allowdisplaybreaks{}

% Colors ==================================================
% \definecolor --------------------------------------------
\definecolor{coGB}{HTML}{1F77B4}
\definecolor{coWG}{HTML}{2CA02C}
\definecolor{coAR}{HTML}{D62728}

\hypersetup{
    colorlinks = True,
    linkcolor  = coAR,
    citecolor  = coWG,
    urlcolor   = coGB,
}

% Math ====================================================
% Letters
\newcommand{\Cbb}{\mathbb{C}}
\newcommand{\Hbb}{\mathbb{H}}

\newcommand{\Lbb}{\mathbb{L}}
\newcommand{\Nbb}{\mathbb{N}}
\newcommand{\Rbb}{\mathbb{R}}
\newcommand{\Tbb}{\mathbb{T}}
\newcommand{\Zbb}{\mathbb{Z}}
\newcommand{\Irm}{\mathrm{I}}
\newcommand{\Lrm}{\mathrm{L}}

\newcommand{\Cscr}{\mathscr{C}}
\newcommand{\Hscr}{\mathscr{H}}
\newcommand{\Kscr}{\mathscr{K}}
\newcommand{\Lscr}{\mathscr{L}}

\newcommand{\Sscr}{\mathscr{S}}

\renewcommand{\epsilon}{\varepsilon}
\newcommand{\eps}{\varepsilon}

\newcommand{\ex}{\mathsf{e}}
\newcommand{\ic}{\mathsf{i}\mkern1mu}

\newcommand{\vect}[1]{{\boldsymbol{#1}}}

% Functions
\newcommand{\Ho}{\mathop{}\!\mathsf{H}^{(1)}}

\DeclareMathOperator{\bJ}{\mathsf{J}}

\DeclareMathOperator{\ce}{\mathsf{ce}}
\DeclareMathOperator{\se}{\mathsf{se}}
\newcommand{\Mce}[1]{\mathop{}\!\mathsf{Mc}^{(#1)}}
\newcommand{\Mse}[1]{\mathop{}\!\mathsf{Ms}^{(#1)}}
\DeclareMathOperator{\argth}{artanh}

\newcommand{\plr}[1]{\left(#1\right)}
\newcommand{\abs}[1]{\left\lvert#1\right\rvert}
\newcommand{\norm}[1]{\left\lVert#1\right\rVert}

\newcommand{\setst}[2]{\left\lbrace#1\ \middle\vert\ #2\right\rbrace}

\newcommand{\di}[1]{\mathop{}\!\mathrm{d}#1}
\DeclareMathOperator{\OO}{\mathcal{O}}
\DeclareMathOperator{\oo}{\mathcal{\scriptstyle{} O}}
\DeclareMathOperator{\diag}{diag}

% Key words
\newcommand{\inc}{\mathsf{in}}
\newcommand{\sca}{\mathsf{sc}}
\newcommand{\eve}{\mathsf{ev}}
\newcommand{\odd}{\mathsf{od}}

\newcommand{\ana}{\mathrm{ana}}
\newcommand{\asy}{\mathrm{asy}}

% Environment =============================================
\theoremstyle{definition}
\newtheorem{definition}{Definition}

\theoremstyle{plain}
\newtheorem{lemma}[definition]{Lemma}

\theoremstyle{remark}
\newtheorem{remark}[definition]{Remark}

%==========================================================
\begin{document}

% Title ===================================================
\title[Quadrature by Parity Asymptotic eXpansions (QPAX)]{Quadrature by Parity Asymptotic eXpansions (QPAX) for scattering by high aspect ratio particles}

\author[C. Carvalho]{Camille Carvalho}
\address{Department of Applied Mathematics, University of California, Merced, 5200 North Lake Road, Merced, CA 95343, USA.}
\email{ccarvalho3@ucmerced.edu}

\author[A. D. Kim]{Arnold D.~Kim}
\address{Department of Applied Mathematics, University of California, Merced, 5200 North Lake Road, Merced, CA 95343, USA.}
\email{adkim@ucmerced.edu}

\author[L. Lewis]{Lori Lewis}
\address{Mathematics Department, Santa Rosa Junior College, 1501 Mendocino Avenue, Santa Rosa, CA 95401, USA.}
\email{llewis@santarosa.edu}

\author[Z. Moitier]{Zo{\"\i}s Moitier}
\address{Karlsruhe Institute of Technology, Institute for Analysis, Englerstra{\ss}e 2, D-76131 Karlsruhe, Germany.}
\email{zois.moitier@kit.edu}

\date{\today}
\thanks{This research was funded by the NSF (DMS-1819052 and DMS-1840265) and by the Deutsche Forschungsgemeinschaft (DFG, German Research Foundation) --- Project-ID 258734477 --- SFB 1173.}

\subjclass[2010]{41A60, 65D30, 65R20}

\keywords{Boundary integral methods; asymptotic analysis; numerical quadrature; scattering.}

\begin{abstract}
    We study scattering by a high aspect ratio particle using boundary integral equation methods.
    This problem has important applications in nanophotonics problems, including sensing and plasmonic imaging.
    To illustrate the effect of parity and the need for adapted methods in presence of high aspect ratio particles, we consider the scattering in two dimensions by a sound-hard, high aspect ratio ellipse.
    This fundamental problem highlights the main challenge and provide valuable insights to tackle plasmonic problems and general high aspect ratio particles.
    For this problem, we find that the boundary integral operator is nearly singular due to the collapsing geometry from an ellipse to a line segment.
    We show that this nearly singular behavior leads to qualitatively different asymptotic behaviors for solutions with different parities.
    Without explicitly taking this nearly singular behavior and this parity into account, computed solutions incur a large error.
    To address these challenges, we introduce a new method called Quadrature by Parity Asymptotic eXpansions (QPAX) that effectively and efficiently addresses these issues.
    We first develop QPAX to solve the Dirichlet problem for Laplace's equation in a high aspect ratio ellipse.  Then, we extend QPAX for scattering by a sound-hard, high aspect ratio ellipse.
    We demonstrate the effectiveness of QPAX through several numerical examples.
\end{abstract}

\maketitle

\setcounter{tocdepth}{1}
\tableofcontents

%============================
\section{Introduction}
%============================

Metal nanoparticles are being used extensively for chemical and biological sensing applications because they exhibit strong electromagnetic field responses and they are biologically and chemically inert (\emph{e.g.}\ see~\cite{anker2008biosensing} for a review).
For these applications, the shape of individual metal nanoparticles can drastically affect the sensitivity of sensors.
Consequently, there has been much interest in understanding how nanoparticle shape affects scattering by electromagnetic fields.
In particular, there has been interest in studying so-called high aspect ratio nanoparticles~\cite{bauer2004biological, paivanranta2011, avolio2019elongated, ruiz2019slender,deng2021mathematical, ruiz2021plasmonic}.
Examples of these include nano-rods, nano-wires, nano-tubes, etc.
High aspect ratio nanoparticles have anisotropic and tunable chemical, electrical, magnetic, and optical properties that make them attractive for designing sensors.
Moreover, the diffusion of high aspect ratio nanoparticles provides enhanced functionality such as delivering DNA in plant cells~\cite{demirer2019}.

It is then fundamental to understand the scattering properties of high
aspect ratio nanoparticles to design next-generation sensors.
However, these scattering problems are challenging due to the inherently high anisotropy affecting both the near- and far-field behaviors of the scattered field.
Indeed, the narrow axis of a high aspect ratio nanoparticle may be much smaller than the wavelength of the incident field, while the long axis may be comparable to or larger than it.

Motivated by these sensing applications, we seek to study the fundamental issues inherent in scattering by high aspect ratio particles.
To that end, we study here a simple model of two dimensional scalar
wave scattering by a high aspect ratio ellipse.
This problem contains several key features that make scattering
by high aspect ratio particles challenging, but is simple enough to
allow for a rigorous analysis. We will also show that the analysis
for this simple problem extends to more general problems.
We study this problem using boundary integral equation methods~\cite{Guenther1996, McLean2000, Kress2014}.
We are interested in using boundary integral equations to study this problem because they provide high accuracy~\cite{Lamp1985, Kress1990, Lee1995, Diethelm1996, Cheng1997, Helsing2008, Yang2012}, they provide valuable physical insight over all scales of the problem, and they generalize easily to more complex problems including multiply connected domains, \emph{e.g.}\ ensembles of high aspect ratio particles.

The two-dimensional problem we study here is advantageous because we
can compute its analytical solution in terms of special functions (see
\cref{sec:Appendix-A}).  We use this analytical solution to validate
our methods.  Moreover, Geer has studied scattering by slender bodies
using asymptotic analysis and obtained a uniformly valid asymptotic
solution~\cite{Geer1978}.  That asymptotic analysis can be applied to
this problem and adds further insight.  Essentially, Geer's
asymptotic solution for this scattering problem is given by a
continuous distribution of point sources over a line segment along the
semi-major axis of the ellipse.  This asymptotic analysis allows us to
anticipate the challenges that the boundary integral equation
formulation will have with this problem.

In this paper we show that there is an underlying nearly singular
behavior within the scattering boundary integral operator, in the
limit where the ellipse coalesces to a line segment.  Nearly singular
behaviors arise when the kernel of the integral operator is sharply
peaked (but not singular), leading to a large error in computations.
Typically, we see nearly singular behaviors in the so-called close
evaluation problem, corresponding to evaluating layer potentials near
the boundary~\cite{Guenther1996, Helsing2008, Sladek2000, Beale2016, Carvalho2018, Carvalho2020}.  In other words, the close evaluation
problem happens \emph{after} one has accurately
solved the boundary integral equation.  For this present problem,
the high aspect ratio ellipse leads to a nearly singular integral
operator in the boundary integral equation, itself, thereby adversely affecting the accuracy
of the computed solution.  Nearly singular boundary integral
equations also arise when boundaries in a multiply-connected domain
are close to one another.  This nearly singular behavior was seen when
particles suspended in Stokes flow were situated closely to one
another~\cite{ludvig2016,tornberg2020accurate,bagge2020highly}.  To our knowledge, there is not a
systematic treatment of nearly singular boundary integral equations.
Thus, this study provides valuable insight into those problems as
well.

By exploring the causes of the emerging nearly singular boundary
integral operator, we find that it is due to a
factor that is the kernel of the double-layer potential for
Laplace's equation.  Thus, we study the related interior Dirichlet
problem for Laplace's equation and find that there exists qualitative
differences in the asymptotic behaviors of solutions with different
parity (with respect to the major axis of the ellipse).  By addressing
this difference in parity explicitly, we develop a new method which we
call Quadrature by Parity Asymptotic eXpansions (QPAX).  We show that
this method effectively solves the interior Dirichlet problem for
Laplace's equation.  Then we extend QPAX for the scattering problem
and show that it effectively addresses the nearly singular behavior of
the associated boundary integral operator.

The remainder of this paper is organized as follows.
\Cref{sec:scattering-problem} presents the problem of scattering by a
high aspect ratio ellipse that we study here and its boundary integral
equation formulation.
This formulation reveals the cause of the nearly singular behavior.
To isolate the nearly singular behavior of the boundary integral operator identified in \Cref{sec:scattering-problem}, we study the related Dirichlet problem for Laplace's equation in a high aspect ratio ellipse in \Cref{sec:laplace}.
Here we identify that the nearly singular behavior leads to qualitatively different asymptotic behaviors based on parity.
We then introduce Quadrature by Parity Asymptotic eXpansions (QPAX) to effectively and efficiently solve that problem.
Finally, we extend QPAX for scattering by a sound-hard, high aspect
ratio ellipse in \Cref{sec:QPAX}.
In \Cref{sec:extensions} we discuss application of this method to other scattering problems including more general high aspect ratio particles shapes.
\Cref{sec:conclusion} gives our conclusions.
\Cref{sec:Appendix-A} provides details about the exact solutions used
for validation in our numerical results, and \cref{sec:Appendix-B}
gives proofs of the considered asymptotic expansions in the paper.

%============================
\section{Scattering by a high aspect ratio ellipse}%
\label{sec:scattering-problem}
%============================

We introduce a simple, two-dimensional model for studying scattering
of scalar waves by a high aspect ratio particle.
Let \( D \subset \Rbb^2 \) be a bounded simply connected set denoting the support of a particle with smooth boundary \( \partial D \) and let \( \overline{D} = D \cup \partial D \).
We study the sound-hard scattering problem, which can be written as
\begin{equation}
    \begin{aligned}
         & \text{Find \( u =  u^\inc + u^\sca \in \Cscr^2(E \coloneqq \Rbb^2 \setminus
            \overline{D}) \cup \Cscr^1(\Rbb^2 \setminus {D}) \) such that: }
        \\
         & \begin{dcases}
            \Delta u + k^2 u = 0 & \text{in } E,          \\
            \partial_{n} u = 0   & \text{on } \partial D, \\
            \lim \limits_{r \to \infty} \int_{|x| = r} \abs{\partial_n
                u^\sca - \ic k u^\sca}^2 \di{\sigma} = 0,
        \end{dcases}
    \end{aligned}\label{eq:sound-hard}
\end{equation}
where \( u \) denotes the total field, \( k \) is the wavenumber, \( u^\sca \) is the scattered field, \( u^\inc \) is the incident field, and \( \partial_n \) denotes the normal derivative.
The last equation in \cref{eq:sound-hard} is the Sommerfeld radiation condition. To study scattering by a high aspect ratio particle, we consider the ellipse defined according to
\begin{equation}
    y(t) = (\eps \cos(t), \sin (t)),
    \quad t \in \Tbb \coloneqq \Rbb / 2\pi \Zbb,
    \label{eq:ellipse}
\end{equation}
with \( 0 < \eps \ll 1 \).
The aspect ratio for this ellipse is \( \eps^{-1} \).
Hence, we study the asymptotic limit, \( \eps \to 0^+ \) for high aspect ratio particles.
Given \( x \in E \), we denote \( x^b \in \partial D \) the closest point on the boundary, and we write \( x^b = y(s) \) for \( s \in \Tbb \) (see \cref{fig:1}).

%
% Fig. 1 -- sketch of the scattering problem
%
\begin{figure}[hbt]
    \centering
    \includegraphics[width=0.35\linewidth]{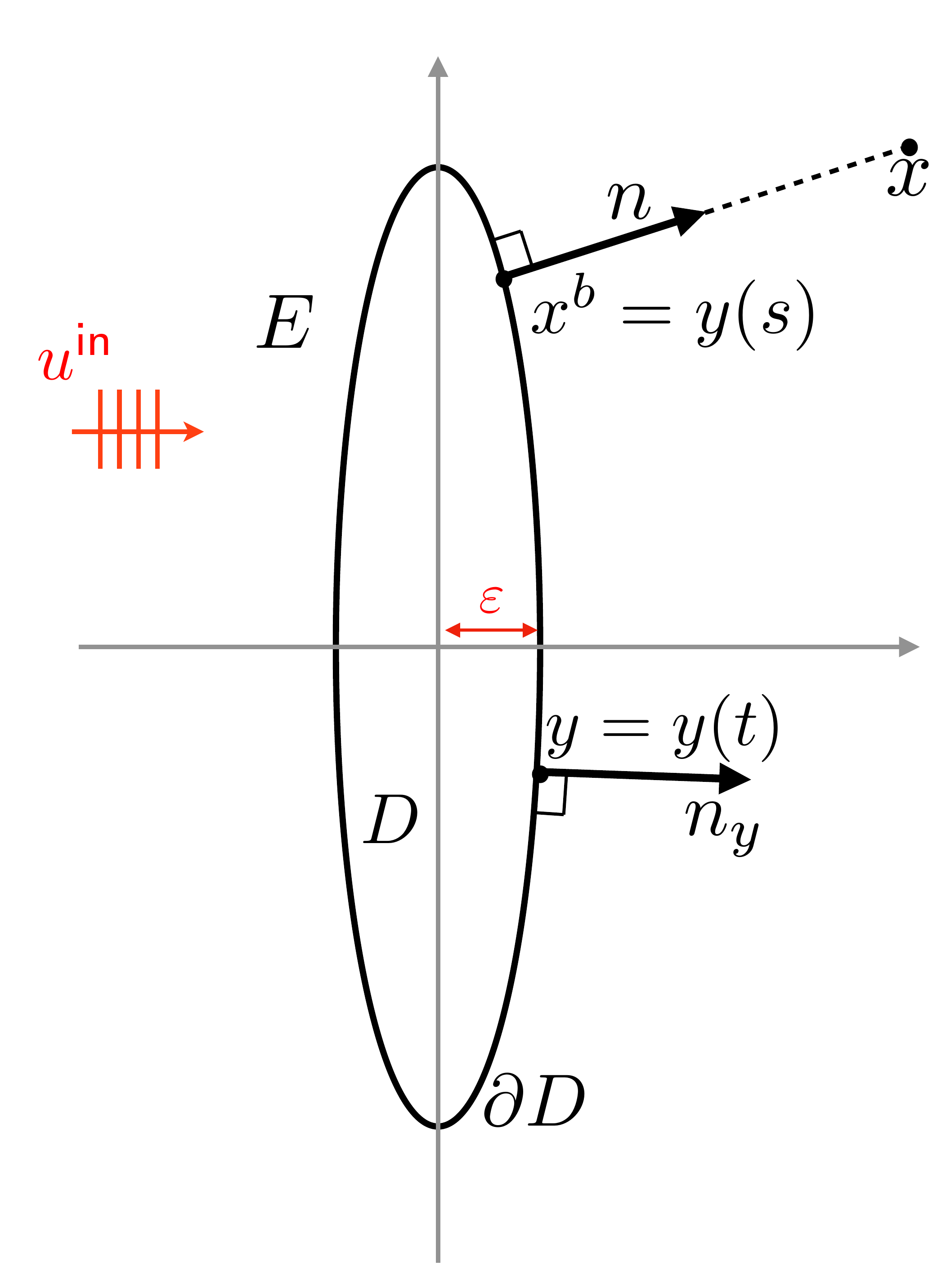}
    \caption{Sketch and notations of the scattering by a high aspect ratio ellipse problem.}%
    \label{fig:1}
\end{figure}

We can write the analytical solution of \cref{eq:sound-hard} in terms of angular and radial Mathieu functions (see \cref{eq_u_ana} in \cref{sec:Appendix-A} for details), and use that analytical solution to study the behavior of fields scattered by the high aspect ratio ellipse.
In particular, we evaluate this analytical solution using the following incident field,
\begin{equation}\label{eq:u_inc_mat}
    u^\inc(\xi, \eta)
    = \sum_{m = 0}^{15} \alpha_m^\inc \Mce{1}_m(\xi, q) \ce_m(\eta, q)
    + \sum_{m  = 1}^{15} \beta_m^\inc \Mse{1}_m(\xi, q) \se_m(\eta, q),
\end{equation}
with \( (\ce_m, \se_m) \) denoting the angular Mathieu functions of
order \( m \) and \( (\Mce{1}_{m}, \Mse{1}_{m}) \) denoting the radial
Mathieu function of the first-kind and order \( m \), both defined using
elliptical coordinates \( (\xi,\eta) \) (see
\cref{sec:Appendix-A} for more details).  The coefficients are
\( \alpha_m^\inc = 2\, \ic^m\, \ce_m(\frac{\pi}{2}, q) \) and
\( \beta_m^\inc = 2\, \ic^m\, \se_m(\frac{\pi}{2}, q) \), and
\cref{eq:u_inc_mat} approximates a plane wave propagating in the
\( +\hat{x} \) direction (see~\cite[Sec.~28.28(i)]{Nist}).  In
\cref{fig:2}, we plot the real part of \( u^{\inc} \) given by
\cref{eq:u_inc_mat} for \( k = 2 \).  We observe in \cref{fig:2} that
this incident field closely approximates a plane wave propagating in
the \( +\hat{x} \) direction within the window
\( [-5, 5] \times [-5,5] \), which provides bounds for our
computational domain.

Using \cref{eq:u_inc_mat} as the incident field, we evaluate the analytical solution for the sound-hard scattering problem, and we plot its amplitude in \cref{fig:3} for \( k = 2 \) and \( \eps = 0.01 \).
Note that with this choice of parameters, the semi-major axis of the ellipse is on the order of the wavelength, but the semi-minor axis is much smaller than the wavelength. The black bar in \cref{fig:3} represents the high aspect ratio ellipse that cannot otherwise be seen.

%
% Fig. 2 -- plot of the incident field
%
\begin{figure}[hbt]
    \centering
    \begin{subfigure}{0.49\textwidth}
        \centering
        \includegraphics{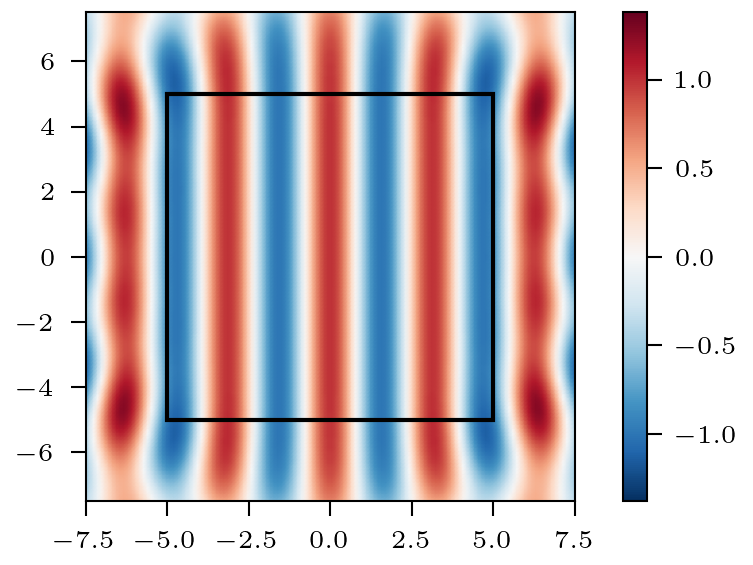}
        \caption{Real part of \( u^\inc \) defined in \cref{eq:u_inc_mat}.}%
        \label{fig:2}
    \end{subfigure}
    \begin{subfigure}{0.49\textwidth}
        \centering
        \includegraphics{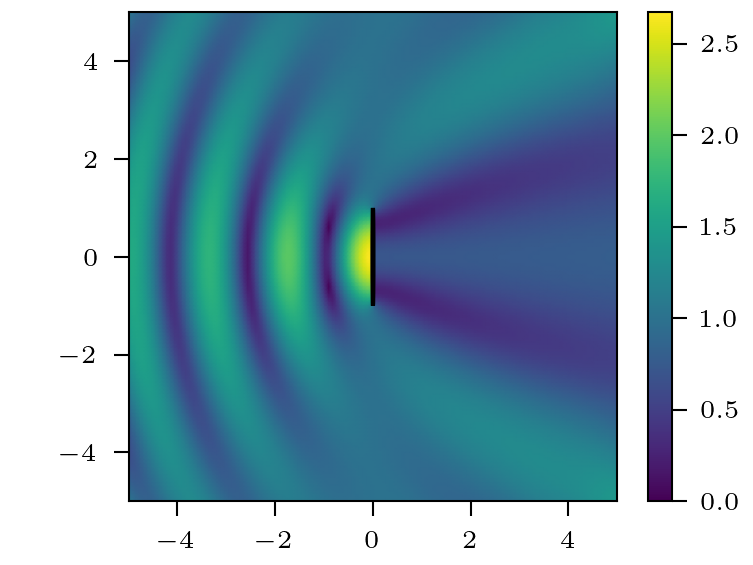}
        \caption{Amplitude of \(u\) solution of \cref{eq:sound-hard}.}%
        \label{fig:3}
    \end{subfigure}
    \caption{{Plots of the real part of the incident field \( u^\inc \) and the amplitude of the total field \( u \).}}
\end{figure}

We find that scattering by the high aspect ratio ellipse is highly anisotropic.
We observe a strong diffraction behavior around the high curvature regions, the solution demonstrates a strong amplitude near the illuminated face of the ellipse and a shadow regions on the other side of the ellipse.
It is clear from this result that the scattered field on or near the boundary plays an important role in this scattering problem, with impact on the far-field.

The boundary value problem above provides a simple setting for identifying and addressing inherent challenges arising from scattering by a high aspect ratio particle. In what follows, we incorporate asymptotic and numerical analysis to develop a method that accurately solves this problem. In \cref{sec:extensions}, we show how this method can be extended to study scattering by a sound-soft ellipse, a penetrable ellipse (in particular for plasmonics), and a particle with a more general shape.

%============================
\subsection{Boundary integral equation formulation}
%============================

Solution of the Helmholtz's equation in the exterior domain \( E \) is given by the representation formula~\cite{Kress2014}
\begin{equation}\label{eq:u}
    u(x) = u^\inc(x) + \int_{\partial D} \partial_{n_y} G(x,y) u(y) \di{\sigma_{y}}
    - \int_{\partial D} G(x,y) \partial_{n_y}u(y) \di{\sigma_{y}},
    \quad x \in E,
\end{equation}
with \( {n_y} \) denoting the outward normal at \( y \) (see \cref{fig:1}).
The fundamental solution is given by
\[
    G(x,y) = \frac{\ic}{4} \Ho_0(k |x-y|)
\]
with \( \Ho_0 \) denoting the Hankel function of the first kind of order zero.
Since \( \partial_{n} u = 0 \) on \( \partial D \),  we find that \cref{eq:u} reduces to
\begin{equation}\label{eq:u2}
    u(x) = u^\inc(x)
    + \int_{\partial D} \partial_{n_y} G(x,y) u(y) \di{\sigma_{y}},
    \qquad x \in E.
\end{equation}
{Using classic properties of the double-layer potential~\cite{Kress2014}, the unknown field \( u \) on the boundary} solves the boundary integral equation
\begin{equation}\label{eq:ubie}
    \frac{1}{2} u(x^b) - \int_{\partial D} \partial_{n_y}
    G(x^b,y) u(y) \di{\sigma_{y}} = u^\inc(x^b),
    \qquad x^b \in \partial D.
\end{equation}

On the high aspect ratio ellipse defined in \cref{eq:ellipse}, we can write the integral in \cref{eq:ubie} as
\[
    \int_{\partial D} \partial_{n_y} G(x^b, y)\ u(y) \di{\sigma_{y}}
    = \int_{\Tbb} \partial_{n_y} G(y(s), y(t))\ u(y(t))\ |y'(t)| \di{t},
\]
we then rewrite the kernel
\begin{equation}\label{eq:kernel-soundhard}
    \partial_{n_y} G(y(s), y(t)) |y'(t)|
    = \frac{\ic k \pi}{2} r(s,t;\eps) \Ho_1(k\, r(s,t;\eps))\, K^L(s,t;\eps),
\end{equation}
with
\[
    r(s,t;\eps) = 2 \abs{ \sin\plr{ \frac{s-t}{2} } }
    \sqrt{ \cos\plr{ \frac{s+t}{2} }^{2} + \eps^{2} \sin\plr{ \frac{s + t}{2} }^{2} },
\]
and
\begin{equation}\label{eq:KL}
    K^L(s,t;\eps) = \frac{1}{2\pi} \frac{-\eps}{1 + \eps^{2} + ( 1 - \eps^{2} ) \cos(s + t)}.
\end{equation}
The kernel \( K^{L} \) given in \cref{eq:KL} corresponds to the kernel for the double-layer potential solution of Laplace's equation for this narrow ellipse.
In light of \cref{eq:kernel-soundhard}, we make the following remarks.
\begin{itemize}
    \item Due to the factor of \( \Ho_{1}(k\, r(s,t;\eps)) \) appearing in the kernel given in \cref{eq:kernel-soundhard}, the derivative of the kernel has a logarithmic singularity on \( s = t \).
          This singularity is well understood (see~\cite{Kress2014}).

    \item For \( t_s \) such that \( s + t_s \equiv \pi \, [2\pi] \) (with the notation  \( k \, [X] \) denoting \( k \) modulo \( X \)), \( K^L(s,t ; \eps) \) given in \cref{eq:KL} behaves according to
          \[
              K^L(s, t_s; \eps) = -\frac{1}{4 \pi \eps}
              \quad \underset{\eps \to 0^+}{\longrightarrow} -\infty.
          \]
          Since
          \[
              \abs{ \frac{\ic k \pi}{2}\, r(s,t_{s};\eps)\, \Ho_{1}(k\, r(s,t_{s};\eps)) }
              \underset{\eps \to 0^+}{\longrightarrow} 1,
          \]
          it follows that the asymptotic behavior for \cref{eq:kernel-soundhard} on \( t_{s} \) is given by this asymptotic behavior for \( K^{L} \).

    \item The kernel \( K^L \) is sharply peaked at the \emph{mirror points} \( s+t \equiv \pi \, [2\pi] \), and this peak is enhanced as the ellipse collapses, in other words when the mirror points become closer.
          This sharp peak leads to a nearly singular integral operator in the boundary integral equation.

    \item The cases \( s = \left\lbrace -\frac{\pi}{2}, \frac{\pi}{2} \right\rbrace \) are degenerate since the derivative of the kernel \cref{eq:kernel-soundhard} admits a singularity for \( s = t \), and we have the nearly singular behavior (the mirror points coincide in this case).
\end{itemize}

\bigskip

The integral operator in the boundary integral equation \cref{eq:ubie} is weakly singular on \( s = t \) and nearly singular on \( s + t \equiv \pi[2\pi] \).
The weak singularity on \( s = t \) can be addressed using the product quadrature rule due to Kress~\cite{Kress1991}.
The nearly singular behavior, however, is problematic.
In the results that follow, we show that this nearly singular behavior leads to large error unless it is explicitly addressed.

\section{Dirichlet problem for Laplace's equation in a high aspect ratio ellipse}%
\label{sec:laplace}

In the previous section, we identified that the factor of \( K^L \) given in \cref{eq:KL} causes nearly singular behaviors in the boundary integral equation.
To isolate this issue, we consider here the following interior Dirichlet problem for Laplace's equation,
\begin{equation}\label{eq:laplace}
    \begin{aligned}
         & \text{Find \( u \in \Cscr^2(D) \cup \Cscr^1(\overline{D}) \) such that: }
        \\
         & \phantom{\{} \Delta u  = 0 \text{ in } D , \quad  u =  - f \text{ on } \partial D,
    \end{aligned}
\end{equation}
where the prescribed boundary data \( -f \) is smooth.
The solution of \cref{eq:laplace} can be represented as the double-layer
potential~\cite{Guenther1996,Kress2014}:
\[
    u(x) = \frac{1}{2 \pi} \int_{\partial D} \frac{n_y \cdot
        (x-y)}{|x-y|^2} \, \mu(y) \di{\sigma_{y}}, \quad x \in D,
\]
with \( \mu  \in \mathscr{C}^2 (\partial D) \)\footnote{In general, continuous function is sufficient. As we will see later we will need \( \mathscr{C}^2 \) regularity.} denoting the solution of the following boundary integral equation,
\begin{equation}\label{eq:bieLap}
    \frac{1}{2}\mu(x^b) -  \frac{1}{2 \pi} \int_{\partial D } \frac{n_y \cdot
        (x^b-y)}{|x^b-y|^2} \, \mu(y) \di{\sigma_{y}} = f(x^b),
    \quad x^b \in \partial D.
\end{equation}
Note that \cref{eq:bieLap} is similar to \cref{eq:ubie}. On the high aspect ratio ellipse \cref{eq:ellipse}, this boundary integral equation becomes
\begin{equation}\label{eq:ubie_param_lap}
    \displaystyle \frac{1}{2}\mu (s) - \int_{\Tbb} K^L (s,t;\eps) \mu(t)
    \di{t}  = f(s), \quad  s\in \Tbb,
\end{equation}
with \( K^L \) given in \cref{eq:KL}.

\subsection{Analytical solution of boundary integral equation \texorpdfstring{\cref{eq:ubie_param_lap}}{\ref{eq:ubie_param_lap}}}

We compute the analytical solution of \cref{eq:ubie_param_lap} as follows.
By computing the discrete Fourier transform of \cref{eq:ubie_param_lap}, we obtain
\[
    \frac{1}{2}\mu_m - 2\pi\mathsf{K}_m \mu_{-m} = f_m, \quad
    m \in \Zbb,
\]
with \( {(\mu_m)}_{m \in \Zbb} \), \( {(\mathsf{K}_m)}_{m \in \Zbb} \), and \( {(f_m)}_{m \in \Zbb} \) denoting the complex Fourier coefficients of \( \mu \), \( K^L \), and \( f \):
\[
    X(s) \coloneqq \sum_{m \in \Zbb} X_m \ex^{\ic m s},
    \quad X = \mu,\ K^L,\ f.
\]
One can check using \cref{eq:KL} that \( K^L(s,t;\epsilon) \) boils down to a function in \( s+t \).
We rewrite this system as
\[
    \plr{\frac{1}{2} - 2\pi\mathsf{K}_0} \mu_0 = f_0
\]
and
\[
    \begin{pmatrix}
        \frac{1}{2}           & -2\pi\mathsf{K}_m \\
        - 2\pi\mathsf{K}_{-m} & \frac{1}{2}
    \end{pmatrix}
    \begin{pmatrix}
        \mu_m \\
        \mu_{-m}
    \end{pmatrix}
    =
    \begin{pmatrix}
        f_m \\
        f_{-m}
    \end{pmatrix},
    \qquad \text{for}\ m \neq 0.
\]
The solutions of these problems are given by
\begin{equation}\label{eq:system_fourier_0}
    \mu_0 = \frac{2 f_0}{1 - 4\pi\mathsf{K}_0 }
\end{equation}
and
\begin{equation}\label{eq:system_fourier_m}
    \mu_m = \frac{4}{1-16\pi^2\mathsf{K}_m\mathsf{K}_{-m}} \left(
    \frac{1}{2} f_m + 2\pi\mathsf{K}_m f_{-m} \right),
    \qquad \text{for}\ m \neq 0.
\end{equation}

Since \( K^L \) is a rational trigonometric function, one can analytically compute \( \mathsf{K}_m \) using~\cite[Eq.~(2.5)]{Geer1995}, for \( m \in \Zbb \), leading to
\[
    \mathsf{K}_m = -\frac{1}{4\pi} \rho_\eps^{|m|},
    \qquad \text{with}\ \rho_\eps = \dfrac{\eps-1}{\eps+1} \in (-1, 0).
\]
Substituting this expression for \( \mathsf{K}_{m} \) into \cref{eq:system_fourier_0} and \cref{eq:system_fourier_m}, we find that
\begin{equation}\label{eq:mu_fourier}
    \mu(s) = f_0 + \sum \limits_{m \in \Zbb^*}
    \frac{2}{1-\rho_\eps^{2|m|}} \plr{ f_m - \rho_\eps^{|m|} f_{-m} } \ex^{\ic m s},
    \quad s \in \mathbb{T} .
\end{equation}

\noindent With the solution given in \cref{eq:mu_fourier}, we find the following.
\begin{itemize}
    \item When \( f = C \) where \( C \) is a constant, \( \mu = C \).

    \item When \( f(s) = \cos(m s) \) for \( m \in \Nbb \), we have \( \mu(s)  = 2 \cos( m s) / ( 1 + \rho_{\eps}^{m} ) \).

    \item When \( f(s)  = \sin( m s ) \) for \( m \in \Nbb^* \), we have \( \mu(s)  = 2 \sin(m s) / ( 1 - \rho_{\eps}^{m} ) \).
\end{itemize}
Note that \( \rho_\eps < 0 \), and \( \rho_\eps = -1 + 2\eps + \OO(\eps^2) \) as \( \eps \to 0^+ \).
As a consequence depending on the parity of \( m \), as \( \eps \to 0^+ \), we have
\[
    1 + \rho_\eps^{m} \sim \begin{cases}
        2m\eps & \text{when \( m \) is odd} ,  \\
        1      & \text{when \( m \) is even} , \\
    \end{cases}
\]
and
\[
    1 - \rho_\eps^{m} \sim \begin{cases}
        1      & \text{when \( m \) is odd} ,  \\
        2m\eps & \text{when \( m \) is even} . \\
    \end{cases}
\]
In light of these results, we are motivated to introduce the following definition.

\begin{definition}\label{def:parity}
    For any function \( f : \Tbb \to \Cbb \):
    \begin{itemize}
        \item We say that \( f \) is \emph{even} if it belongs to the space

              \( \Cscr_\eve(\Tbb) \coloneqq \setst{f \in \Cscr(\Tbb)}{f(\pi-s) = f(s)} \);

        \item We say that \( f \) is \emph{odd} if it belongs to the space

              \( \Cscr_\odd(\Tbb) \coloneqq \setst{f \in \Cscr(\Tbb)}{f(\pi-s) = - f(s)} \).
    \end{itemize}
    We define the even and odd parts of \( f \) as \( f_\eve(s) = \frac{1}{2} (f(s) + f(\pi-s)) \) and \( f_\odd(s) = \frac{1}{2} (f(s) - f(\pi-s)) \).
\end{definition}

\noindent With \cref{def:parity} established, we state the following results.
\begin{itemize}
    \item When \( f \) is \emph{even} in the sense of \cref{def:parity} (typically \( s \mapsto \sin((2p+1) s) \) or \( \cos (2 p s) \) for \( p \in \Nbb \)), then the solution \( \mu \) is bounded as \( \eps \to 0^+ \).

    \item When \( f \) is \emph{odd} in the sense of \cref{def:parity} (typically \( s \mapsto \sin(2 p s) \) or \( \cos ((2p+1) s) \) for \( p \in \Nbb^* \)) then the solution \( \mu \) is unbounded as \( \eps \to 0^+ \).
\end{itemize}
The qualitative differences between solutions when the Dirichlet boundary data is even or odd will play an important role in addressing the nearly singular behavior of \( K^L \).

\subsection{Modified trapezoid rule (MTR)}%
\label{sec:nearly-singular-laplace}

A method to compute the leading behavior of nearly singular integrals is given in~\cite{Carvalho2020} (following classic perturbation techniques~\cite{Hinch1991}).
In what follows we apply that method to \cref{eq:ubie_param_lap}.
Recall that \( K^{L} \) exhibits nearly singular behavior on \( s + t \equiv \pi[2\pi] \).
For this reason, we introduce
\[
    I_{\delta}(\eps) = \int_{\pi - s - \frac{\delta}{2}}^{\pi - s + \frac{\delta}{2}} K^L(s,t;\eps) \, \mu(t) \di{t},
\]
where \( \delta >0 \) is fixed.
To determine the leading behavior of \( I_{\delta}(\epsilon) \) as \( \eps \to 0^{+} \), we perform a series of substitutions.
First, we shift by substituting \( t = \pi - s + x \), then we rescale using the stretched coordinate \( x = \eps X \).
These lead to
\begin{equation}\label{eq:rescaled_I_lapl}
    I_{\delta}(\eps) = \frac{1}{2\pi}
    \int_{-\frac{\delta}{2\eps}}^{\frac{\delta}{2\eps}}
    \frac{-\eps^{2}}{1+\eps^2 - (1-\eps^2)\cos(\eps X)} \, \mu(\pi - s +
    \eps X)  \di{X}.
\end{equation}
Assuming \( \mu \) is also differentiable, we expand the integrand in \cref{eq:rescaled_I_lapl} about \( \eps = 0^+ \) and find
\begin{equation}\label{eq:expansion_lap}
    \frac{-\eps^{2} \mu(\pi - s + \eps X)}{1+\eps^2 -
        (1-\eps^2)\cos(\eps X)} = -\frac{2}{4 + X^{2}} \left[ \mu(\pi - s)
        + \eps X \mu'(\pi - s) \right] + \OO(\eps^{2}).
\end{equation}
All expansions have been computed via Mathematica and SymPy, and notebooks can be found in the Github repository~\cite{PaperCode2021}.
Integrating this expansion term-by-term, the \( \OO(\eps) \) in \cref{eq:expansion_lap} vanishes upon integration since it is odd (in the classic sense), and we find
\begin{equation}\label{eq:Asympt}
    I_{\delta}(\eps) = -\frac{1}{\pi}
    \arctan\left(\frac{\delta}{4\eps} \right) \mu(\pi-s) + \OO(\eps).
\end{equation}
We refer the reader to~\cite{Carvalho2020} for a proper justification of the obtained order after integration. This result gives the asymptotic behavior of the portion of boundary integral operator in \cref{eq:ubie_param_lap} about \( s + t \equiv \pi \, [2 \pi] \) leading to its nearly singular behavior.
We now use this leading behavior to modify the Periodic Trapezoid Rule (PTR).
Given \( 2N \) quadrature points, let \( s_i = t_i = i \Delta t - \pi/2 \), for  \( i \in \llbracket 0,  2N-1 \rrbracket \) (the set of integers between \( 0 \) and \( 2N-1 \)), and with \( \Delta t = \pi/N \).
It follows that \( s_i + t_{-i \, [2N]} \equiv \pi \, [2\pi] \) (with the notation \( t_{k [N]} \) denoting \( t_k \) for \( k \in \Zbb \) modulo \( N \)).
Instead of using the trapezoid rule on those mirror points, we replace it with \( I_{\Delta t}(\eps) \) (namely \cref{eq:Asympt} where we substitute \( \delta = \Delta t \)).
We call this method the Modified Trapezoid Rule (MTR).
Applying this MTR to \cref{eq:bieLap} yields the following linear system,
\begin{equation}\label{eq:MTR-laplace}
    \frac{1}{2} \mu_i
    + \frac{1}{\pi} \arctan\plr{\frac{\Delta t}{4\eps}} \mu_{-i \, [2N]}
    - \Delta t \sum_{\substack{j = 0 \\ j \neq -i \, [2N]}}^{2N-1} K^L(s_i, t_j; \eps) \, \mu_j = f(s_i)
\end{equation}
for \( i \in \llbracket 0,  2N-1 \rrbracket \).
Using \cref{eq:MTR-laplace}, we relieve the trapezoid rule from having to approximate the underlying nearly singular behavior coming from \( K^L \).
It is in this way that we expect this modification to help in computing solutions of \cref{eq:ubie_param_lap}.

\Cref{fig:4} shows results for the relative error made by the MTR to solve \cref{eq:ubie_param_lap} using \( 2N = 64 \) quadrature points, and two different sources: \( f(s) = \cos(4 s) \), and \( f(s) = \sin(5s) \).
For comparison, we include the relative error made by the PTR and by the Quadrature by Parity Asymptotic eXpansions (QPAX) described below, with the same number of quadrature points.
Results show that the MTR works well when \( f \) is even (in the sense of \cref{def:parity}) where the relative error exhibits an \( \OO(\eps) \) behavior as \( \eps \to 0^+ \) (following the expected order from \cref{eq:Asympt}).
However when \( f \) is odd, the relative error is large and comparable to PTR for all values of \( \eps \).
Thus, the MTR is only effective for solving \cref{eq:ubie_param_lap} when \( f \) is even.

\subsection{Quadrature by parity asymptotic expansions (QPAX)}%
\label{ssec:asympt_parity_laplace}

It is clear from previous results that we must explicitly take into account the qualitative differences between even and odd boundary data to fully address the nearly singular behavior of \( K^L \).
In what follows, we develop a numerical method based on the asymptotic expansions for even and odd parities.
We call this new method Quadrature by Parity Asymptotic eXpansions (QPAX).

We now rewrite \cref{eq:ubie_param_lap} as \( \Lscr[\mu](s) = f(s) \), with \( \Lscr[\mu](s) = \left( \frac{1}{2}\Irm - \Kscr^L\right)[\mu](s) \), and
\begin{equation}\label{eq:int_op_Kl}
    \Kscr^L[\mu](s) \coloneqq \int_\Tbb K^L (s,t; \eps) \mu(t) \di{t},
    \quad s \in \Tbb.
\end{equation}
We write a formal asymptotic expansion of the operator \( \Lscr \sim \sum_{q \in \Nbb} \eps^q\, \Lscr_q \) as \( \eps \to 0^{+} \).
Using standard matched asymptotic techniques, one obtains the following result (the proof is given in \cref{ssec:KL_asy})~\cite{Carvalho2020}:

\begin{lemma}\label{lem:asy_KL}
    The integral operator \( \Kscr^L \) admits the following expansion \( \Kscr^L[\mu] = \Kscr^L_0[\mu] + \eps \Kscr^L_1[\mu] + \oo(\eps) \), for \( \mu \in \Cscr^2(\Tbb) \), where
    \[
        \Kscr^L_0[\mu](s) = -\frac{1}{2}\mu(\pi-s)
        \quad \text{and} \quad
        \Kscr^L_1[\mu](s) = -\frac{1}{2\pi} \int_\Tbb \mathsf{E}_{\pi-s}[\mu](t) \di{t}
    \]
    with \( (s, t) \mapsto \mathsf{E}_s[\mu](t) \in \Cscr(\Tbb^2) \) defined by
    \begin{equation}\label{eq:Es}
        \mathsf{E}_s[\mu](t) = \begin{dcases}
            \frac{\mu(s+t) - 2\mu(s) + \mu(s-t)}{2(1-\cos(t))} &
            \text{if } t \neq 0 \, [2\pi]                        \\
            \mu''(s)                                           &
            \text{if } t = 0 \, [2\pi]
        \end{dcases}.
    \end{equation}
\end{lemma}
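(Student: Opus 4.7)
The plan is to recenter the peak of $K^L$ at the origin, exploit the resulting symmetry in the shifted variable, and then split the asymptotics into (i) an \emph{exact} scalar integral giving $\Kscr^L_0$, and (ii) a dominated-convergence limit giving $\Kscr^L_1$. First, I would substitute $u = t - (\pi - s)$ in \cref{eq:int_op_Kl}: using $\cos(s+t) = \cos(\pi+u) = -\cos u$ and $1+\eps^2 - (1-\eps^2)\cos u = 2\sin^2(u/2) + 2\eps^2\cos^2(u/2)$, the operator becomes
\[
\Kscr^L[\mu](s) = -\frac{\eps}{4\pi}\int_{\Tbb} \frac{\mu(\pi-s+u)}{\sin^2(u/2) + \eps^2\cos^2(u/2)} \di{u},
\]
and the new kernel depends on $u$ only through $\cos u$ --- hence is \emph{even} in $u$. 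Splitting $\mu(\pi-s+u) = \mu(\pi-s) + [\mu(\pi-s+u) - \mu(\pi-s)]$ and symmetrizing the bracket over $u \mapsto -u$, the key algebraic identity
\[
\mu(\pi-s+u) + \mu(\pi-s-u) - 2\mu(\pi-s) = 2(1-\cos u)\, \mathsf{E}_{\pi-s}[\mu](u),
\]
immediate from \cref{eq:Es}, yields the exact decomposition
\[
\Kscr^L[\mu](s) = \mu(\pi-s)\, I_0(\eps) - \frac{\eps}{2\pi}\int_{\Tbb} \frac{\sin^2(u/2)}{\sin^2(u/2) + \eps^2\cos^2(u/2)}\, \mathsf{E}_{\pi-s}[\mu](u) \di{u},
\]
with $I_0(\eps) = -\dfrac{\eps}{4\pi}\displaystyle\int_{\Tbb} \dfrac{\di{u}}{\sin^2(u/2) + \eps^2\cos^2(u/2)}$.

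The Weierstrass substitution $v = \tan(u/2)$ reduces the scalar integral defining $I_0(\eps)$ to $\int_{\Rbb} \frac{2\di{v}}{v^2+\eps^2} = 2\pi/\eps$, so $I_0(\eps) \equiv -\tfrac{1}{2}$ \emph{identically} in $\eps$. This already delivers $\Kscr^L_0[\mu](s) = -\tfrac{1}{2}\mu(\pi-s)$. For the remaining integral, I would write
\[
\frac{\sin^2(u/2)}{\sin^2(u/2) + \eps^2\cos^2(u/2)} = 1 - \frac{\eps^2\cos^2(u/2)}{\sin^2(u/2) + \eps^2\cos^2(u/2)},
\]
where the second quotient is bounded by $1$ and tends pointwise to $0$ for every $u \not\equiv 0 \, [2\pi]$. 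The hypothesis $\mu \in \Cscr^2(\Tbb)$ ensures that $\mathsf{E}_{\pi-s}[\mu]$ is continuous on $\Tbb$ (its limiting value at $u=0$ being $\mu''(\pi-s)$), hence bounded. Dominated convergence then yields
\[
\int_{\Tbb} \frac{\sin^2(u/2)}{\sin^2(u/2) + \eps^2\cos^2(u/2)}\, \mathsf{E}_{\pi-s}[\mu](u) \di{u} = \int_{\Tbb} \mathsf{E}_{\pi-s}[\mu](u) \di{u} + \oo(1),
\]
and the prefactor $-\eps/(2\pi)$ produces exactly $\eps\, \Kscr^L_1[\mu](s) + \oo(\eps)$, which is the claimed expansion.

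The main conceptual obstacle is spotting the identity $\mu(s+t) + \mu(s-t) - 2\mu(s) = 2(1-\cos t)\, \mathsf{E}_s[\mu](t)$: it is what makes the slightly unusual definition of $\mathsf{E}_s$ in \cref{eq:Es} the natural object, it turns the nearly singular $\OO(\eps)$ contribution into the integral of a bounded function against a uniformly bounded multiplier, and it is also where the $\Cscr^2$ hypothesis is genuinely used --- to guarantee that $\mathsf{E}_{\pi-s}[\mu]$ is a bona fide continuous function on $\Tbb$ rather than merely locally integrable. Once that identity is in hand, the remainder reduces to a single closed-form trigonometric integral plus one dominated-convergence argument; no stretched-coordinate matching is strictly needed, although conceptually the substitution $u = \eps X$ (implicit in the Weierstrass computation) is what makes the Lorentzian profile $2/(4+X^2)$ appear and motivates the whole split.
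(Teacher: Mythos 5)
Your proof is correct and takes a genuinely different, and arguably cleaner, route than the paper. The paper follows the standard matched asymptotic expansion playbook: after centering the variable at $x = 0$, it introduces an intermediate scale $\delta$ with $\eps \ll \delta \ll 1$, splits the integral into an inner part (stretched with $x = \eps X$, expanded, then re-expanded in $\eps/\delta$) and an outer part (expanded in $\eps/\tan(x/2)$, then in $\delta$), and finally reconciles the matching terms $\pm\tfrac{4}{\pi}\mu(\pi-s)\,\eps/\delta$ and kills the error by choosing $\delta = \sqrt{\eps}$. Your proof replaces all of this with an exact algebraic step: the even-in-$u$ symmetry of the kernel lets you symmetrize $\mu$, the identity $\mu(\pi-s+u)+\mu(\pi-s-u)-2\mu(\pi-s) = 4\sin^2(u/2)\,\mathsf{E}_{\pi-s}[\mu](u)$ exactly cancels the near-singular denominator, and the scalar prefactor $I_0(\eps)$ comes out \emph{identically} equal to $-\tfrac{1}{2}$ by Weierstrass substitution. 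The leading term $\Kscr^L_0$ therefore appears with no error at all, and the $\eps$-term is obtained from a single dominated-convergence argument on a uniformly bounded integrand. This avoids the cutoff bookkeeping entirely and, as a bonus, if one evaluates $\int_{\Tbb}\eps^2\cos^2(u/2)/(\sin^2(u/2)+\eps^2\cos^2(u/2))\,\di u = 2\pi\eps/(1+\eps)$ explicitly, one sees the remainder is in fact $\OO(\eps^2)$, slightly sharper than the claimed $\oo(\eps)$. The one cosmetic point worth making explicit is uniformity in $s$: since $(s,u)\mapsto\mathsf{E}_{\pi-s}[\mu](u)$ is continuous on the compact $\Tbb^2$ it is uniformly bounded, and the $s$-independent bound just noted then gives $\oo(\eps)$ uniformly in $s$, which is what the operator-level statement requires. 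The paper's method is more mechanical and transfers more readily to the harder kernels $\Kscr^\Psi$ and $\Kscr^{\ln}$ (where no exact cancellation is available), which is presumably why the authors use it throughout; yours is preferable when, as here, the structure of the kernel admits a closed-form reduction.
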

\Cref{lem:asy_KL} requires at least \( \Cscr^2 \) regularity for \( \mu \) to define the continuous function \( \mathsf{E}_s \) in \cref{eq:Es} (which we assume throughout the rest of the paper). Using \cref{lem:asy_KL}, we obtain
\begin{equation}\label{eq:opL_01}
    \Lscr_0[\mu](s) = \frac{\mu(s) + \mu(\pi-s)}{2}
    \quad \text{and} \quad
    \Lscr_1[\mu](s) = \frac{1}{2\pi} \int_\Tbb \mathsf{E}_{\pi-s}[\mu](t) \di{t}.
\end{equation}
From \cref{eq:opL_01}, we directly have \( \Lscr_0[\mu] = \mu_\eve \)
and \( \Cscr_\odd(\Tbb) \subset \ker(\Lscr_0) \), therefore
\( \Lscr_0[\mu] = f \) is ill-posed for \( f \in \Cscr(\Tbb) \).  If
we assume that the right-hand side
\( f = \sum_{q \in \Nbb} \eps^q f_q \) and the density
\( \mu = \sum_{q \in \Nbb} \eps^q \mu_q \) have a smooth expansion, we
will only be able to solve for the even part.  For the odd part, the
solution is not smooth as \( \eps \to 0^+ \) and we need to consider a
different asymptotic expansion.
The need for a different asymptotic expansion for the odd part
is also revealed by the \( \OO(\eps^{-1}) \) behavior of the analytical
solution given in \cref{eq:mu_fourier}.
To take this limit behavior into account, we use the following two asymptotic expansions for the even and odd parts of \( \mu = \mu_\eve + \mu_\odd \):
\[
    \mu_\eve = \sum_{q \in \Nbb} \eps^q\, \mu_q^\eve
    \quad \text{and} \quad
    \mu_\odd = \frac{1}{\eps}\, \mu_{-1}^\odd + \sum_{q \in \Nbb} \eps^q\, \mu_q^\odd.
\]
One can show that the operator \( \Lscr \) preserves even and odd parity: \( {(\Lscr[\mu])}_{\eve / \odd} = \Lscr[\mu_{\eve / \odd}] \).
Using this result, we then substitute these expansions into \( \Lscr[\mu_\eve + \mu_\odd] = \sum_{q \in \Nbb} \eps^q\, (f_q^\eve + f_q^\odd) \), and obtain to leading order
\begin{equation}\label{eq:Lap_eq_01}
    \Lscr_0[\mu_0^\eve] = f_0^\eve,
    \qquad
    \Lscr_0[\mu_1^\eve] + \Lscr_1[\mu_0^\eve] = f_1^\eve,
    \quad \text{and} \quad
    \Lscr_1[\mu_{-1}^\odd] = f_0^\odd.
\end{equation}
The operator \( \Lscr_0 \) is invertible on \( \Cscr_\eve(\Tbb) \) (it is the identity operator).
The nullspace of \( \Lscr_1 \) is simply the set of constant functions (see \cref{lem:L1_spec}).
Therefore \( \Lscr_{1} \) is invertible on \( \Cscr_\odd(\Tbb) \).
Using \cref{eq:Lap_eq_01}, we get
\[
    \mu_0^\eve = f_0^\eve,
    \qquad
    \mu_1^\eve = f_1^\eve - \Lscr_1[f_0^\eve],
    \quad \text{and} \quad
    \mu_{-1}^\odd = {(\Lscr_1)}^{-1} [f_0^\odd].
\]
In practice we compute the approximation \( \mu \simeq \mu^\asy =  \mu_\eve^\asy +  \mu_\odd^\asy \) with
\begin{equation}\label{eq:system_parity_solve}
    \mu_\eve^\asy = f_\eve - \eps \, \Lscr_1[f_\eve]
    \quad \text{and} \quad
    \mu_\odd^\asy = \frac{1}{\eps} \, {(\Lscr_1)}^{-1} [f_\odd].
\end{equation}
Based on the chosen ansatz, we anticipate the following relative errors:
\[
    \frac{\norm{\mu_\eve -
            \mu_\eve^\asy}_{\Lrm^\infty(\Tbb)}}{\norm{\mu_\eve}_{\Lrm^\infty(\Tbb)}}
    = \OO\plr{\eps^2} \quad \text{and} \quad \frac{\norm{\mu_\odd
            -
            \mu_\odd^\asy}_{\Lrm^\infty(\Tbb)}}{\norm{\mu_\odd}_{\Lrm^\infty(\Tbb)}}
    = \OO\plr{\eps}.
\]

\begin{remark}\label{rem:eigenvalue}
    The decomposition into even and odd parts can be also understood from a spectral point of view. For the case of the high aspect ratio ellipse, it is known that \( \Kscr^L \) exhibits eigenvalues \( {(\lambda_n^{\eve/ \odd})}_n \) (associated to even eigenfunctions, odd eigenfunctions, respectively) such that \( \lambda_n^{\eve}  \underset{n \to \infty}{\longrightarrow}- \frac{1}{2} \), \( \lambda_n^{\odd}  \underset{n \to \infty}{\longrightarrow} \frac{1}{2} \) (see~\cite{ando2020spectral} for its generalization to two dimensional thin planar domains such as rectangles). It is also the reason why, when considering \( \Lscr^L = \frac{1}{2}\Irm - \Kscr^L \), ill-posedness arises in presence of an odd source term.
\end{remark}

We now turn to the discretization, for which we will need to separate even and odd parts.
From \cref{lem:L1_spec}, we find that for any \( m \in \Zbb \),
\[
    \Lscr_1\left[ \cos\plr{ m t } \right](s) = {(-1)}^{m+1} m \cos\plr{ m s },
    \quad
    \Lscr_1\left[ \sin\plr{ m t } \right](s) = {(-1)}^m m \sin\plr{ m s }.
\]
Using these two relations, we define the discretized operators \( \Lbb_1^\eve \) and \( \Lbb_1^\odd \) as follows. Given \( 2N \) uniformly spaced grid points starting at \( - \frac{\pi}{2} \) (bottom of the narrow ellipse), one can simply use the half grid to define the even and odd parts.
Let \( \mathsf{C}_{N+1} \) denote the \( (N+1) \times (N+1) \) matrix corresponding to the discrete cosine transform with the following entries,
\begin{align*}
    \plr{\mathsf{C}_{N+1}}_{i,0} = \sqrt{\frac{2}{N}} \frac{1}{2},
     &  &
    \plr{\mathsf{C}_{N+1}}_{i,j} = \sqrt{\frac{2}{N}} \cos\plr{\frac{i j \pi}{N}},
     &  &
    \plr{\mathsf{C}_{N+1}}_{i,N} = \sqrt{\frac{2}{N}} \frac{{(-1)}^{i}}{2},
\end{align*}
for \( (i, j) \in \llbracket 0, N \rrbracket^2 \).
In terms of \( \mathsf{C}_{N+1} \), we define \( \Lbb_1^\eve \) according to
\begin{equation}
    \Lbb_1^\eve = \mathsf{C}_{N+1} \, \diag\plr{0, 1, \ldots, N} \, \mathsf{C}_{N+1}.
    \label{eq:L1-even}
\end{equation}
Let \( \mathsf{S}_{N-1} \) denote the \( (N-1) \times (N-1) \) matrix corresponding to the discrete sine transform with entries
\begin{equation*}
    \plr{\mathsf{S}_{N-1}}_{i,j}= \sqrt{\frac{2}{N}} \sin\plr{\frac{i j
            \pi}{N}},
\end{equation*}
for \( (i, j) \in \llbracket 1, N-1 \rrbracket^2 \). In terms of \( \mathsf{S}_{N-1} \), we
define \( \Lbb_1^\odd \) according to
\begin{equation}
    \Lbb_1^\odd  = \mathsf{S}_{N-1} \, \diag\plr{-1, -2, \ldots, -(N-1)} \, \mathsf{S}_{N-1} .
    \label{eq:L1-odd}
\end{equation}

We now use these asymptotic results to develop a numerical method to
solve \cref{eq:ubie_param_lap}.
Given the Dirichlet boundary data \( f \), we first compute the vectors,
\begin{equation*}
    \vect{f}_{N+1}^\eve
    = \plr{ f_\eve(s_i) }_{i \in \llbracket 0, N \rrbracket}
    = \plr{ \frac{1}{2} f(s_i) + \frac{1}{2} f(\pi - s_{i}) }_{i\in \llbracket 0, N \rrbracket},
\end{equation*}
and
\begin{equation*}
    \vect{f}_{N-1}^\odd
    = \plr{ f_\odd(s_i) }_{i \in \llbracket 1, N-1 \rrbracket}
    = \plr{ \frac{1}{2} f(s_i) - \frac{1}{2} f(\pi - s_{i}) }_{i \in \llbracket 1, N-1 \rrbracket},
\end{equation*}
where \( s_i = i \Delta t - \frac{\pi}{2} \), \( i \in \llbracket 0, 2N-1 \rrbracket \) are the PTR quadrature points.
Next, we compute the numerical approximation of \cref{eq:system_parity_solve} through evaluation of
\begin{equation*}
    \vect{\mu}_{N+1}^\eve = \vect{f}_{N+1}^\eve - \eps \, \Lbb_1^\eve \vect{f}_{N+1}^\eve
    \quad \text{and} \quad
    \vect{\mu}_{N-1}^\odd = \eps^{-1} \, \plr{\Lbb_1^\odd}^{-1} \vect{f}_{N-1}^\odd.
\end{equation*}
With these results, we compute the approximation
\begin{equation}\label{eq:eve_odd_to_mu}
    \mu(s_{i}) \approx \begin{cases}
        \plr{ \vect{\mu}_{N+1}^\eve }_i
         & i = 0, N ,
        \\[1ex]
        \plr{ \vect{\mu}_{N+1}^\eve }_i + \plr{ \vect{\mu}_{N-1}^\odd }_i
         & i \in \llbracket 1,  N-1 \rrbracket,
        \\[1ex]
        \plr{ \vect{\mu}_{N+1}^\eve }_{2N-i} + \plr{ \vect{\mu}_{N-1}^\odd }_{2N-i}
         & i \in \llbracket N+1,  2N-1 \rrbracket.
    \end{cases}
\end{equation}
We denote \( \vect{\mu}_{2N} \) to be the \( 2N \) vector whose entries are given by \cref{eq:eve_odd_to_mu}.

In \cref{fig:4} we show the relative error, \( \norm{\vect{\mu}_{2N} - \vect{\mu}_\ana}_\infty / \norm{\vect{\mu}_\ana}_\infty \) (with \( \vect{\mu}_\ana \) the vector denoting discrete analytic solution at the quadrature points computed via \cref{eq:mu_fourier}) as a function of \( \eps \) using PTR,
MTR, and QPAX to compute the solution of \cref{eq:ubie_param_lap}. All codes are publicly available on Github~\cite{PaperCode2021}.
For these results, the number of quadrature points for all methods is \( 2N = 64 \).
The results in the left plot are for the even source \( f(s) = \cos(4 s) \) and the results in the right plot are for the odd source \( f(s) = \sin(5s) \).
Results indicate, as expected, that the error made by QPAX is \( \OO(\eps^{2}) \) for an even source and \( \OO(\eps) \) for an odd source.
For the considered even sources, we have taken the two-term asymptotic approximation in \cref{eq:system_parity_solve} which leads to the \( \OO(\eps^3) \) because the second-order term vanishes. From the parity asymptotic expansions, we found that the odd part contributes globally to the nearly singular behavior. It is the reason why the MTR (based only on local inner expansion) fails for odd sources.

\begin{figure}[hbt]
    \centering
    \begin{subfigure}{0.49\textwidth}
        \centering
        \includegraphics{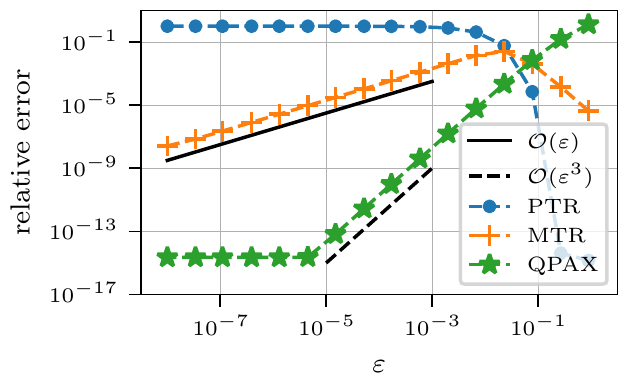}
        \caption{Even: \(f(s) = \cos(4 s)\)}
    \end{subfigure}
    \begin{subfigure}{0.49\textwidth}
        \centering
        \includegraphics{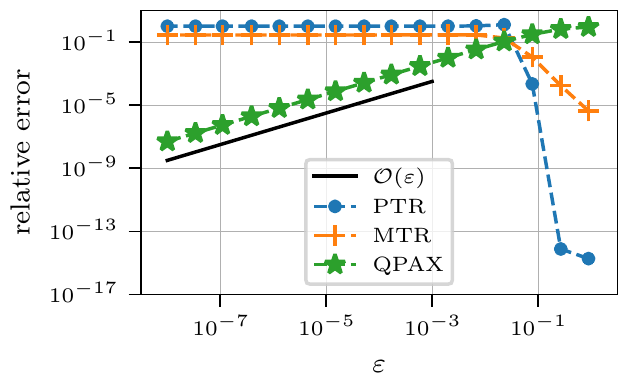}
        \caption{Odd: \(f(s) = \sin(5 s)\)}
    \end{subfigure}
    \caption{%
        Results for relative error as a function of \( \eps \) in the     numerical solution of the boundary integral equation for the Dirichlet problem for Laplace's equation in a high aspect ratio ellipse for PTR (blue curves with ``\( \bullet \)'' symbols), MTR (orange curves with ``+'' symbols), and QPAX (green curves with ``\( \star \)'' symbols).
        All of these results were computed using \( 2N = 64 \) quadrature points.
        In the left plot, we show relative error for the even source \( f(s) = \cos(4 s) \) and in the right plot, we show relative error results for the odd source \( f(s) = \sin(5s) \).
        The solid black line represents the \( \OO(\eps) \) convergence slope and the dashed black line represents the \( \OO(\eps^3) \) convergence slope.
    }%
    \label{fig:4}
\end{figure}

\begin{remark}
    For simplicity, we have explicitly used a spectral decomposition to discretize the operator \( \Lscr_1 \).
    This is obtained using \cref{lem:L1_spec}.
    Other discretizations are possible as long as we split the expansion as in \cref{eq:system_parity_solve}.
    For example Chebyshev nodes on interval \( \plr{ -\frac{\pi}{2}, \frac{\pi}{2} } \) may be appropriate since the operator \( \Lscr_1 \) is diagonalizable on \( \Cscr_{\eve / \odd}^2(\Tbb) \).
\end{remark}

\section{QPAX for scattering by a high aspect ratio ellipse}%
\label{sec:QPAX}

From the discussion on the Dirichlet problem for Laplace's equation in a high aspect ratio ellipse, we found that it is necessary to consider different asymptotic expansions for the even and odd parts of the solution.
We now extend those results to the scattering problem by a sound-hard, high aspect ratio ellipse.

We rewrite \cref{eq:ubie} on the high aspect ratio ellipse as \( \Lscr^H[u] = u^\inc \) with \( \Lscr^H \coloneqq \frac{1}{2}\Irm - \Kscr^H \) and the operator \( \Kscr^H[\mu](s) = \int_{\Tbb} K(s,t; \eps) \mu(t) \di{t} \).
Using \cref{eq:kernel-soundhard}, we write the kernel of \( \Kscr^H \) as \( K(s,t; \eps) = H(z_\eps(s, t)) \, K^L(s,t; \eps) \) where
\begin{equation}\label{eq:zeps}
    z_\eps(s, t) \coloneqq k \, r(s,t; \eps) = 2\, k\,
    \abs{ \sin\left(\tfrac{s-t}{2}\right) } \,
    \sqrt{ {\cos\plr{ \tfrac{s+t}{2} }}^2 + \eps^2 \sin\plr{ \tfrac{s+t}{2} }^2 },
\end{equation}
and \( H \) denotes the continuous function \( H(z) \coloneqq \frac{\ic \pi}{2} \, z \Ho_1(z) \).
Although \( H \) is continuous, its derivative has a logarithmic singularity at \( s = t \) (i.e. \( z = 0 \)).
To address this singularity in the derivative, we use~\cite[Section.~10.8]{Nist} to decompose \( H \) according to
\[
    H(z) = \Psi(z) - \frac{z \bJ_1(z)}{2} \ln\left(\frac{4
        z^2}{k^2}\right),
\]
where \( \Psi \) is an analytic function on \( \Cbb \) satisfying \( \Psi(0) = 1 \) and \( \Psi'(0) = 0 \).
Then we split the integral operator according to \( \Kscr^H = \Kscr^\Psi + \Kscr^{\ln} \) where
\begin{subequations}
    \begin{align}
        \Kscr^\Psi[\mu](s)
         & = \int_\Tbb K^L(s, t; \eps) \ \Psi(z_\eps(s, t)) \ \mu(t) \di{t} \label{eq:KPsi},
        \\
        \Kscr^{\ln}[\mu](s)
         & = -\frac{1}{2} \int_\Tbb K^L(s, t; \eps) \ z_\eps(s, t) \bJ_1(z_\eps(s, t)) \ln\left(\frac{4 {z_\eps(s, t)}^2}{k^2}\right) \ \mu(t) \di{t} \label{eq:Kln}.
    \end{align}
\end{subequations}

We now state two useful Lemmas.

\begin{lemma}\label{lem:asy_KPsi}
    The integral operator \( \Kscr^\Psi \) defined in \cref{eq:KPsi} admits the expansion \( \Kscr^\Psi[\mu] = \Kscr^\Psi_0[\mu] + \eps \Kscr^\Psi_1[\mu] + \oo(\eps) \), for \( \mu \in \Cscr^2(\Tbb) \), where
    \[
        \Kscr^\Psi_0[\mu](s) = -\frac{1}{2}\mu(\pi-s)
        \quad \text{and} \quad
        \Kscr^\Psi_1[\mu](s) = -\frac{1}{2\pi} \int_\Tbb \mathsf{E}_{\pi-s}[\psi_s \, \mu](t) \di{t}
    \]
    with \( \psi_s(t) = \Psi\plr{ 2\, k\, \abs{ \sin\left(\tfrac{s-t}{2}\right) \cos\left(\tfrac{s+t}{2}\right) } } \), and \( \mathsf{E}_{s} \) defined in \cref{eq:Es}.
\end{lemma}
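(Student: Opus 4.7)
The plan is to reduce \cref{lem:asy_KPsi} to \cref{lem:asy_KL} by freezing the analytic factor $\Psi(z_\eps(s,t))$ at $\eps = 0$. Let $z_0(s,t) \coloneqq 2k\abs{\sin\plr{\tfrac{s-t}{2}}\cos\plr{\tfrac{s+t}{2}}}$ denote the pointwise $\eps \to 0^+$ limit of $z_\eps(s,t)$, so that $\psi_s(t) = \Psi(z_0(s,t))$. I would split
\[
    \Kscr^\Psi[\mu](s) = \underbrace{\int_\Tbb K^L(s,t;\eps)\, \psi_s(t)\,\mu(t)\di{t}}_{=:\, A(s,\eps)}
    \;+\; \underbrace{\int_\Tbb K^L(s,t;\eps)\bigl[\Psi(z_\eps(s,t)) - \psi_s(t)\bigr]\mu(t)\di{t}}_{=:\, R(s,\eps)},
\]
and treat the two pieces separately.

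For the main term $A(s,\eps)$, since $\Psi$ is entire (and even, as is clear from the small-$z$ expansion of $zH_1^{(1)}(z)$), the density $t \mapsto \psi_s(t)\mu(t)$ lies in $\Cscr^2(\Tbb)$ for each fixed $s$. Applying \cref{lem:asy_KL} to this density at the same point $s$ yields
\[
    A(s,\eps) = -\tfrac{1}{2}\psi_s(\pi-s)\mu(\pi-s) - \tfrac{\eps}{2\pi}\int_\Tbb \mathsf{E}_{\pi-s}[\psi_s\mu](t)\di{t} + o(\eps).
\]
A direct evaluation gives $z_0(s,\pi-s) = 0$, hence $\psi_s(\pi-s) = \Psi(0) = 1$, so the leading term reduces to $-\tfrac12\mu(\pi-s) = \Kscr^\Psi_0[\mu](s)$, while the $\OO(\eps)$ correction matches the claimed formula for $\Kscr^\Psi_1[\mu](s)$.

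The main obstacle is showing $R(s,\eps) = o(\eps)$, since the bracket $\Psi(z_\eps) - \psi_s$ is integrated against the nearly singular $K^L$. The key is the identity
\[
    z_\eps(s,t)^2 - z_0(s,t)^2 = 4k^2\eps^2 \sin^2\plr{\tfrac{s-t}{2}}\sin^2\plr{\tfrac{s+t}{2}},
\]
together with $\Psi$ being analytic and even (so representable as an analytic function of $z^2$), which yields $\Psi(z_\eps(s,t)) - \psi_s(t) = \OO(\eps^2)$ uniformly on $\Tbb^2$. I would then partition the integration domain into a window $\abs{t - (\pi-s)} < \sqrt{\eps}$ around the mirror point and its complement. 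Outside the window, \cref{eq:KL} gives $K^L(s,t;\eps) = \OO(\eps)$, so the contribution is $\OO(\eps^3)$. Inside the window, the rescaling $t = \pi - s + \eps X$ used in \cref{eq:rescaled_I_lapl} turns $K^L\di{t}$ into a bounded integrable density in $X$, so the $\OO(\eps^2)$ bracket contributes at total order $\eps^2$. Altogether $R(s,\eps) = \OO(\eps^2) = o(\eps)$, which completes the expansion.
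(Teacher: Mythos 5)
Your plan coincides with the paper's: freeze $\Psi$ at its $\eps=0$ limit $\psi_s$, apply \cref{lem:asy_KL} to the $\Cscr^2$ density $\psi_s\mu$ (using $\psi_s(\pi-s)=\Psi(0)=1$), and bound the remainder using the fact that $\Psi$ is even/$\Psi'(0)=0$ together with $z_\eps^2-z_0^2 = \OO(\eps^2)$ to get $\Psi(z_\eps)-\psi_s=\OO(\eps^2)$ uniformly. One small slip in your remainder estimate: outside the window $\abs{t-(\pi-s)}>\sqrt{\eps}$ you have $K^L(s,t;\eps)=\OO\plr{\eps/(t-(\pi-s))^2}$, which is $\OO(1)$ at the window boundary rather than $\OO(\eps)$, and $\int_{\text{outer}}\abs{K^L}\di{t}=\OO(\sqrt{\eps})$, giving an outer contribution of $\OO(\eps^{5/2})$ rather than $\OO(\eps^3)$; the conclusion $R=\OO(\eps^2)=\oo(\eps)$ is unaffected, and in fact the window split is unnecessary since $\int_\Tbb\abs{K^L(s,t;\eps)}\di{t}=\tfrac12$ uniformly in $\eps$, which alone gives $R=\OO(\eps^2)$.
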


\begin{lemma}\label{lem:asy_Kln}
    The integral operator \( \Kscr^{\ln} \) defined in \cref{eq:Kln} admits the expansion
    \( \Kscr^{\ln}[\mu] = \eps \Kscr^{\ln}_1[\mu] + \oo(\eps) \), for
    \( \mu \in \Cscr^2(\Tbb) \), where
    \[
        \Kscr^{\ln}_1[\mu](s) = \frac{1}{2\pi} \int_\Tbb {[\phi_s \,
                    \mu]}_\eve(t) \ \ln\left(4 \, {\sin\left(\tfrac{s-t}{2}\right)}^2
        \right) \di{t}
    \]
    with
    \begin{equation}\label{eq:Phi_expression}
        \phi_s(t) = \begin{dcases}
            k \abs{\frac{\sin\plr{\tfrac{s-t}{2}}}{\cos\plr{\tfrac{s+t}{2}}}}
            \bJ_1\plr{ 2\, k\, \abs{ \sin\plr{\tfrac{s-t}{2}} \cos\plr{\tfrac{s+t}{2}} } }
             & \text{if } t \not \equiv \pi-s \, [2\pi] \\
            2 k^2 \sin\plr{\tfrac{s-t}{2}}^2
             & \text{if } t \equiv \pi-s \, [2\pi]
        \end{dcases}.
    \end{equation}
\end{lemma}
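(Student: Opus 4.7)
The plan is to mimic the matched-asymptotic strategy used for \cref{lem:asy_KL,lem:asy_KPsi}, exploiting a crucial simplification: the factor $-\tfrac{1}{2}\,z_\eps \bJ_1(z_\eps) \ln(4z_\eps^2/k^2)$ added to $K^L$ in the kernel of $\Kscr^{\ln}$ vanishes on the mirror line $s+t \equiv \pi\,[2\pi]$ as $\eps\to 0^+$, because $z_\eps \to 0$ there and $w\,\bJ_1(w)\ln(w^2) \to 0$ as $w\to 0$. Consequently the $\OO(1)$ leading contribution present in \cref{lem:asy_KL} does not propagate, and the expansion of $\Kscr^{\ln}[\mu]$ begins directly at order $\eps$.

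First I would identify the pointwise leading behavior on $\{s+t \not\equiv \pi\,[2\pi]\}$. Writing $K^L(s,t;\eps) = -\eps/[4\pi\cos^2(\tfrac{s+t}{2})] + \OO(\eps^3)$ and $z_\eps(s,t) = z_0(s,t) + \OO(\eps^2)$, with $z_0(s,t) = 2k\abs{\sin(\tfrac{s-t}{2})\cos(\tfrac{s+t}{2})}$, and using the smoothness of $w\mapsto w\,\bJ_1(w)$, the integrand expands as
\[
    -\tfrac{1}{2}\, K^L\, z_\eps \bJ_1(z_\eps)\, \ln\!\plr{\tfrac{4z_\eps^2}{k^2}}\, \mu(t)
    = \frac{\eps}{4\pi}\, \phi_s(t)\, \ln\!\plr{\tfrac{4z_0^2}{k^2}}\, \mu(t) + \text{error},
\]
once we recognize $\phi_s(t) = z_0\,\bJ_1(z_0)/[2\cos^2(\tfrac{s+t}{2})]$ away from the mirror line. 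The continuous extension of $\phi_s$ at $t \equiv \pi - s\,[2\pi]$ follows from the expansion $\bJ_1(w) = w/2 + \OO(w^3)$ and matches the piecewise definition in \cref{eq:Phi_expression}.

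Next I would symmetrize to obtain the stated form of $\Kscr^{\ln}_1$. Factoring $\ln(4z_0^2/k^2) = \ln(4\sin^2(\tfrac{s-t}{2})) + \ln(4\cos^2(\tfrac{s+t}{2}))$ and applying the change of variable $t \mapsto \pi-t$ on $\Tbb$—which leaves $z_0$ invariant and swaps $\cos^2(\tfrac{s+t}{2}) \leftrightarrow \sin^2(\tfrac{s-t}{2})$—rewrites the $\cos^2$-integral as $\int_\Tbb \phi_s(\pi-t)\,\mu(\pi-t)\,\ln(4\sin^2(\tfrac{s-t}{2}))\,\di{t}$. Averaging this with the $\sin^2$-integral produces the even combination $[\phi_s\mu]_\eve$ multiplying $\ln(4\sin^2(\tfrac{s-t}{2}))$, yielding $\Kscr^{\ln}_1[\mu]$ exactly as claimed.

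The main obstacle is proving the remainder is $\oo(\eps)$. The outer Taylor expansion is non-uniform near the mirror line since its $\OO(\eps^2)$ correction carries negative powers of $\cos(\tfrac{s+t}{2})$, so a matched expansion is required. Splitting $\Tbb$ into an inner region $\abs{s+t-\pi} < \eps^\gamma$ and its complement for a small $\gamma \in (0,1)$, the outer contribution is bounded by $\OO(\eps^{2-n\gamma})$ for some finite $n$. In the inner region, rescaling $t = \pi - s + \eps X$ produces a $\ln\eps$ contribution arising from $\ln(4z_\eps^2/k^2)$ that cancels exactly against the $\ln z_0^2$ singularity of $\eps\,F_1$ on the same region; combined with the Jacobian $\di{t} = \eps\,\di{X}$ and the $\Cscr^2$-expansion $\mu(\pi-s+\eps X) = \mu(\pi-s) + \OO(\eps)$, the inner contribution is $\OO(\eps^{1+\gamma})$. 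Choosing $\gamma$ appropriately small makes both pieces $\oo(\eps)$, following the template of~\cite{Carvalho2020}.
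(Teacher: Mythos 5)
Your sketch reaches the correct result by essentially the paper's strategy—pointwise expansion off the mirror line, symmetrization via $t\mapsto\pi-t$, and matched asymptotics for the remainder—but you miss an algebraic identity that the paper uses to make the argument considerably cleaner. From \cref{eq:KL} and \cref{eq:zeps} one has \emph{exactly} $K^L(s,t;\eps)\,z_\eps(s,t)^2 = -\frac{\eps k^2}{\pi}\sin^2\!\plr{\tfrac{s-t}{2}}$ (the denominator $\cos^2(\tfrac{s+t}{2})+\eps^2\sin^2(\tfrac{s+t}{2})$ in $K^L$ cancels the identical factor produced by $z_\eps^2$). Writing $\bJ_1(z)=z\Phi(z)$ with $\Phi$ even, the factor $K^L\,z_\eps\bJ_1(z_\eps) = -\frac{\eps k^2}{\pi}\sin^2(\tfrac{s-t}{2})\,\Phi(z_\eps)$ therefore admits a \emph{uniform} Taylor expansion in $\eps$ with error $\OO(\eps^3)$, carrying no negative powers of $\cos(\tfrac{s+t}{2})$. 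The paper exploits this: the only genuinely non-uniform piece is the factor $\ln(4z_\eps^2/k^2)=\ln(4\sin^2(\tfrac{s-t}{2}))+\ln(4\cos^2(\tfrac{s+t}{2})+4\eps^2\sin^2(\tfrac{s+t}{2}))$, and the matched-asymptotics argument is isolated in a stand-alone lemma (\cref{lem:asy_ln}) applied only to that second log. Your route instead Taylor-expands $K^L$ and $z_\eps\bJ_1(z_\eps)$ separately, which manufactures an $\OO(\eps^3/\cos^4)$ error that you then must control with inner/outer regions; this can be made to work (your exponents $\OO(\eps^{2-n\gamma})$, $\OO(\eps^{1+\gamma})$ are in the right ballpark, and for small $\gamma$ both are $\oo(\eps)$), but it is strictly more laborious. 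Two small imprecisions worth noting: the claimed ``exact'' cancellation between the $\ln\eps$ contribution and the singularity of the limit integrand in the inner region is not needed and is not literally exact—both terms are individually $\OO(\eps^{1+\gamma}\abs{\ln\eps})=\oo(\eps)$, which suffices; and the assertion that the limit of $\phi_s$ at the mirror line ``matches'' \cref{eq:Phi_expression} should actually be checked, since $\bJ_1(w)\sim w/2$ gives $\phi_s(t)\to k^2\sin^2(\tfrac{s-t}{2})$ as $t\to\pi-s$.
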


The proof of the expansions for \( \Kscr^{\Psi} \) and \( \Kscr^{\ln} \),
defined in \cref{eq:KPsi} and \cref{eq:Kln}, can be found in
\cref{ssec:Kpsi_asy} and \cref{ssec:say_Kln}, respectively.
Using \cref{lem:asy_KPsi} and~\cref{lem:asy_Kln} we can write the asymptotic expansion of the integral operator \( \Lscr^H = \Hscr_0 + \eps \, \Hscr_1 + \oo(\eps) \) where
\begin{subequations}\label{eq:H_asy}
    \begin{align}
        \Hscr_0[\mu](s)
         & = \frac{\mu(s) + \mu(\pi-s)}{2} = \mu_\eve(s)
        \\
        \Hscr_1[\mu](s)
         & = \frac{1}{2\pi} \int_\Tbb \mathsf{E}_{\pi-s}[\psi_s \, \mu](t) \di{t}
        - \frac{1}{2\pi} \int_\Tbb {[\phi_s \, \mu]}_\eve
        \ln\plr{ 4 {\sin\left(\tfrac{s-t}{2}\right)}^2 } \di{t}.
    \end{align}
\end{subequations}
Similar to the Dirichlet problem for the Laplace's equation, we have \( \ker(\Hscr_0) = \Cscr_\odd(\Tbb) \) and the leading order problem \( \Hscr_0 [u] = u^\inc \) is not well-posed on \( \Cscr(\Tbb) \).
As a consequence we need to separate the even and odd terms to solve \( \Lscr^H [u] = u^\inc \).
We write the source term as \( f(s) = u^\inc(y(s)) = u^\inc(\eps \cos(s), \sin(s)) \), and we expand \( f = f_\eve + f_\odd \) with \( f_\eve= \sum_{q \in \Nbb} \eps^q \, f_q^\eve(s) \) and \( f_\odd = \sum_{q \in \Nbb} \eps^q \, f_q^\odd(s) \).
To highlight the parity scales, we write as before \( u = u_\eve + u_\odd \) with the ansatz
\begin{equation}\label{eq:ansatz_scattering}
    u_\eve = \sum_{q \in \Nbb} \eps^q\, u_q^\eve
    \quad \text{and} \quad
    u_\odd = \frac{1}{\eps}\, u_{-1}^\odd + \sum_{q \in \Nbb} \eps^q\, u_q^\odd
\end{equation}
as \( \eps \to 0^+ \).
For the high aspect ratio ellipse, the expansions of the source term give us
\begin{subequations}
    \begin{align}
        f_0^\eve(s) & = u^\inc(0, \sin(s)), & f_1^\eve(s) & = 0, \label{eq:f_ev}
        \\
        f_0^\odd(s) & = 0,                  & f_1^\odd(s) & = \cos(s)\, \partial_x u^\inc(0, \sin(s)).\label{eq:f_od}
    \end{align}
\end{subequations}
Contrary to Laplace's problem, the scattering problem is always well-posed, we know that \( u_{-1}^\odd \equiv 0 \), and \( f_0^\odd \equiv 0 \) is expected.
However ill-posedness of the asymptotic problem as \( \eps \to 0^+ \) remains (but it is subtle): note that the leading order term of \( f_\odd \) is \( \OO(\eps) \) while the one for \( u_\odd \) is \( \OO(1) \).
It is the reason why we still shift the power index of \( \eps \).
To obtain leading behavior of the solution of \( \Lscr^H[u] = u^\inc \), we replace \( \Lscr^H \) by \( \Hscr_0 + \eps \, \Hscr_1 \), substitute the expansions for \( u_{\eve} \) and \( u_{\odd} \) defined in \cref{eq:ansatz_scattering}, and use the fact that \( \Lscr^H \) (and consequently \( \Hscr_0, \Hscr_1 \)) preserves parity.
In the end we obtain similar equations as \cref{eq:Lap_eq_01}: \( \Hscr_0[u_0^\eve] = f_0^\eve \), \( \Hscr_0[u_1^\eve] = - \Hscr_1[f_0^\eve] \), and \( \Hscr_1 [u_0^\odd] = f_1^\odd \).
In practice we simply need to compute
\begin{subequations}\label{eq:system_parity_solve_scatt_2}
    \begin{align}
        u_\eve^\asy(s)
         & = u^\inc(0, \sin(s)) - \eps \, \Hscr_1 \left[ u^\inc(0, \sin(s)) \right],
        \\[1ex]
        u_\odd^\asy(s)
         & = \Hscr_1^{-1} \left[ \cos(s) \, \partial_x u^\inc(0, \sin(s)) \right] .
    \end{align}
\end{subequations}
We obtain an asymptotic approximation \( u \simeq u^\asy =  u_\eve^\asy +  u_\odd^\asy \) in the limit as \( \eps \to 0^+ \), whose error is \( \oo(\eps) \).
As for the Laplace case, we anticipate the following relative errors:
\[
    \frac{\norm{u_\eve - u_\eve^\asy}_{\Lrm^\infty(\Tbb)}}{\norm{u_\eve}_{\Lrm^\infty(\Tbb)}}
    = \OO\plr{\eps^2}
    \quad \text{and} \quad
    \frac{\norm{u_\odd - u_\odd^\asy}_{\Lrm^\infty(\Tbb)}}{\norm{u_\odd}_{\Lrm^\infty(\Tbb)}}
    = \OO\plr{\eps}.
\]
We now introduce a numerical method for computing \cref{eq:system_parity_solve_scatt_2}.
This method modifies the product Gaussian quadrature rule by Kress~\cite{Kress1991} for two-dimensional scattering problems.
Using the PTR quadrature points \( s_i = t_i = i \Delta t - \pi/2 \) for \( i\in \llbracket 0,  2N-1 \rrbracket \) with \( \Delta t = \pi/N \), Kress' Gaussian Product Quadrature Rule (PQR) is given by
\begin{equation}\label{eq:Kress-quad}
    \int_\Tbb K(s_i, t; \eps) u(t) \di{t}
    \approx \sum_{j=0}^{2N -1} \left[R_{|N-j|}^{(N)} \, K_1(s_i,t_j)
        + \Delta t \, K_2(s_i, t_j) \right] u_j,
\end{equation}
with
\[
    K_1(s, t; \eps) = -\frac{1}{2} z_\eps(s, t) \bJ_1 (z_\eps(s, t)) K^L(s,t; \eps),
\]
and
\[
    K_2(s, t; \eps) = \frac{1}{2}  z_\eps(s, t)
    \left[ \ic \pi \Ho_1(z_\eps(s, t)) + \bJ_1(z_\eps(s, t))
        \ln\plr{ 4\, {\sin\plr{ \tfrac{s-t}{2} }}^2 } \right]
    K^L(s, t;\eps).
\]
Here, \( \bJ_1 \) is the Bessel function of first kind and of order one and \( K^L \) is given in \cref{eq:KL}.
The quadrature weights in \cref{eq:Kress-quad} are given by
\begin{equation}\label{eq:Kress_weights}
    R_k^{(N)} \coloneqq - \frac{{(-1)}^k\pi}{N^2} -\frac{2\pi}{N}
    \sum_{m=1}^{N-1} \frac{1}{m}\cos \left( \frac{mk\pi}{N} \right),
    \quad k \in  \llbracket 0, N \rrbracket .
\end{equation}
Using this quadrature rule within a Nystr\"om method to solve \cref{eq:ubie}, we obtain
\begin{equation}\label{eq:PQR-system}
    \frac{1}{2} u_i - \sum_{j=0}^{2N-1} \left[ R_{|N-j|}^{(N)} K_1(s_i,
        t_j) + \frac{\pi}{N} K_2(s_i, t_j) \right] u_j = u^\inc(s_i),
    \quad i \in  \llbracket 0, 2 N -1 \rrbracket.
\end{equation}
This PQR explicitly addresses the weak singularity in the derivative in \( K \) on \( s = t \) due to the Hankel function \( \Ho_{1} \).
However, it does not address the nearly singular behavior due to \( K^L \), so we do not expect it to work well for the high aspect ratio ellipse.

\bigskip

We can easily modify the PQR method to include the modification we derived in \cref{sec:nearly-singular-laplace} based on the asymptotic expansion for \( K^L \).
First, we observe that \( K_1 = \OO(\eps) \) as \( \eps \to 0^{+} \), so it does not exhibit any nearly singular behavior.
Consequently, we do not need to modify that part of the PQR\@.
However, \( K_2 \) exhibits a nearly singular behavior as \( \eps \to 0^{+} \), so we write, for \( i \in  \llbracket 0, 2 N -1 \rrbracket \),
\[
    \int_{\Tbb} K_2(s_i, t; \eps) u(t) \di{t}
    \approx - \frac{1}{\pi} \arctan\plr{\frac{\Delta t}{4\eps}} u_{-i \, [2N]} + \Delta t \sum_{\substack{j = 0 \\ j \neq -i \, [2N]}}^{2N-1} K_2(s_i, t_j; \eps) \, u_j.
\]
Incorporating this modification into the Nystr\"om method into \cref{eq:PQR-system}, we obtain
\begin{multline}\label{eq:BIE_disc_helm}
    \frac{1}{2} u_i
    - \sum_{j=0}^{2N -1} R_{|N-j|}^{(N)} \, K_1(s_i,t_j) u_j
    + \frac{1}{\pi} \arctan\plr{\frac{\Delta t}{4\eps}} \mu_{-i \,
        [2N]}\\
    - \Delta t \sum_{\substack{j = 0 \\ j \neq -i \, [2N]}}^{2N-1}
    K_2(s_i, t_j; \eps) \, u_j
    = u^\inc(s_i),
\end{multline}
for \( i \in  \llbracket 0, 2N-1 \rrbracket \).
We call \cref{eq:BIE_disc_helm} the Modified Product Quadrature Rule (MPQR).
We expect MPQR to fail when considering an odd source term.

\bigskip

We now modify PQR further to extend QPAX to solve \cref{eq:ubie}.
Using \cref{eq:H_asy}, we define the discretized even operator \( \Hbb_1^\eve \) and the discretized odd operator \( \Hbb_1^\odd \) as follows.
First, we split \( \Hbb_1^{\eve / \odd} = \Hbb_{1, \psi}^{\eve / \odd} - \Hbb_{1,\ln}^{\eve / \odd} \) into the two parts corresponding to the two integrals in \cref{eq:H_asy}. As before, we make use of the first half of the quadrature points to create the even and odd discrete operators. The entries for \( \Hbb_{1, \psi}^\eve \) are given by
\[
    \plr{ \Hbb_{1, \psi}^\eve }_{i, j}
    = \plr{\Lbb_1^\eve}_{i,j} \, \psi_{s_i}(t_j),
    \quad (i, j) = \llbracket 0, N \rrbracket^2,
\]
and the entries of \( \Hbb_{1, \psi}^\odd \) are given by
\[
    \plr{ \Hbb_{1, \psi}^\odd }_{i, j} =
    \plr{\Lbb_1^\odd}_{i,j} \, \psi_{s_i}(t_j),
    \quad(i, j) = \llbracket 1, N-1 \rrbracket^2,
\]
with \( \Lbb_{1}^\eve \) given in \cref{eq:L1-even} and \( \Lbb_{1}^\odd \) given in \cref{eq:L1-odd}.
One can check that the entries of \( \Hbb_{1, \ln}^{\eve / \odd} \) are given by
\begin{align*}
    \plr{ \Hbb_{1, \ln}^\eve }_{i,j}
     & = \plr{W_{N+1}^\eve}_{i,j} \, \frac{\phi_{s_i}(t_j) + \phi_{s_i}(\pi-t_j)}{2},
     &
     & (i, j) = \llbracket 0, N \rrbracket^2,
    \\[
    1ex]
    \plr{ \Hbb_{1, \ln}^\odd }_{i,j}
     & = \plr{W_{N+1}^\eve}_{i,j} \, \frac{\phi_{s_i}(t_j) - \phi_{s_i}(\pi-t_j)}{2}
     &
     & (i, j) = \llbracket 1, N-1 \rrbracket^2,
\end{align*}
respectively, where \( W_{N+1}^\eve \) is the matrix of Kress weights
for integrating even functions defined by
\[
    W_{N+1}^\eve
    = \mathsf{C}_{N+1} \, \diag\plr{0, -\frac{2\pi}{1}, -\frac{2\pi}{2}, \ldots, -\frac{2\pi}{N}} \, \mathsf{C}_{N+1}
\]
with \( \mathsf{C}_{N+1} \) denoting the same matrix of the discrete
cosine transform used in \cref{eq:L1-even}.
As for the Laplace case, from \cref{eq:system_parity_solve_scatt_2}, we obtain the
approximation of the unknowns \( \vect{u}_{N+1}^\eve \approx {(u_\eve(s_i))}_{i\in \llbracket 0, N \rrbracket} \) and \( \vect{u}_{N-1}^\odd \approx {(u_\odd(s_i))}_{i\in \llbracket 1, N-1 \rrbracket} \) as
\[
    \vect{u}_{N+1}^\eve = \vect{f}_{N+1}^\eve - \eps \, \Hbb_1^\eve \vect{f}_{N+1}^\eve
    \quad \text{and} \quad
    \vect{u}_{N-1}^\odd = \plr{\Hbb_1^\odd}^{-1} \vect{f}_{N-1}^\odd
\]
where \( \vect{f}_{N+1}^\eve = \plr{u^\inc(0, \sin(s_i))}_{i\in \llbracket 0, N \rrbracket} \) and \( \vect{f}_{N-1}^\odd = \plr{\cos(s_i) \, \partial_x u^\inc(0, \sin(s_i))}_{i\in \llbracket 1, N -1\rrbracket} \).
Then, we recombine the even and odd parts using \cref{eq:eve_odd_to_mu} to obtain the approximation vector \( \vect{u}_{2N} \).

\bigskip

We now study the relative error \( \norm{\vect{u}_{2N} - \vect{u}_\ana}_\infty / \norm{\vect{u}_\ana}_\infty \) (with \( \vect{u}_\ana \) the vector denoting discrete analytic solution at the quadrature points computed via \cref{eq_u_ana}) as a function of \( \eps \) made by PQR, MQPR, and QPAX\@.
All codes are publicly available on Github~\cite{PaperCode2021}.
In \cref{fig:5}, we show these relative errors for \( u^\inc(x, y) = \Mce{1}_3(\xi, q) \ce_3(\eta, q) \) which produces a purely even source on the boundary (left plot) and \( u^\inc(x, y) = \Mse{1}_2(\xi, q) \se_2(\eta, q) \) which produces a purely odd source on the boundary (right plot).
For these results, the number of quadrature points is fixed at \( 2N = 64 \) for all of the approximations.
These results mimic what we had seen for the Dirichlet problem for Laplace's equation in a high aspect ratio ellipse.
Here, the PQR approximation produces large errors as \( \eps \to 0^+ \) because it does not account for the nearly singular behavior of the integral operator for a high aspect ratio ellipse.
For an even source, the relative error of the MPQR exhibits an \( \OO(\eps) \), but it does not work well for an odd source.
In contrast, the relative error of the QPAX approximation exhibits, as anticipated, an \( \OO(\eps^{2}) \) behavior for the even source and an \( \OO(\eps) \) behavior for the odd source.
Hence, the QPAX approximation effectively addresses the inherent parity in the nearly singular behavior associated with this high aspect ratio ellipse.

%
% Fig. 5 -- Scattering results comparing PQR, MPQR, and QPAX for an
% even and odd source
%
\begin{figure}[hbt]
    \centering
    \begin{subfigure}{0.49\textwidth}
        \centering
        \includegraphics{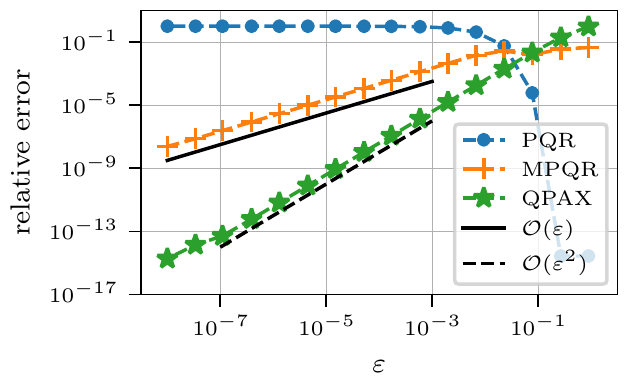}
        \caption{Even source: \(u^\inc(x, y) = \Mce{1}_3(\xi, q) \ce_3(\eta, q)\)}
    \end{subfigure}
    \begin{subfigure}{0.49\textwidth}
        \centering
        \includegraphics{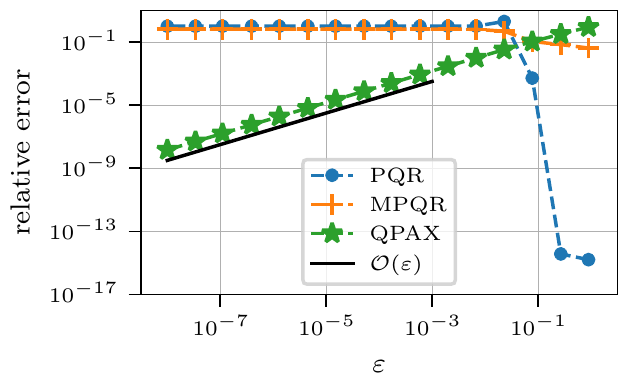}
        \caption{Odd source: \(u^\inc(x, y) = \Mse{1}_2(\xi, q) \se_2(\eta, q)\)}
    \end{subfigure}
    \caption{%
        Results for relative error as a function of \( \eps \) in the numerical solution of the boundary integral equation for scattering by a sound-hard, high aspect ratio ellipse for PQR (blue curves with ``\( \bullet \)'' symbols), MPQR (orange curves with ``+'' symbols), and QPAX (green curves with ``\( \star \)'' symbols).
        All of these results were computed using \( 2N = 64 \) quadrature points. The left plot shows results for an incident field that produces an even source on the boundary and the right plot shows results for an incident field that produces an odd source on the boundary.
        The black line represents the \( \OO(\eps) \) convergence slope and the dashed black line represents the \( \OO(\eps^2) \) convergence slope.
    }%
    \label{fig:5}
\end{figure}

Because the relative error for the QPAX approximation for odd parity exhibits an \( \OO(\eps) \) behavior, we expect that it exhibits an \( \OO(\eps) \) behavior for a general incident field.
To verify that this is indeed true, we show in \cref{fig:7a} the relative error with respect to the infinity norm as a function of \( \eps \) for the approximate plane wave given by \cref{eq:u_inc_mat}.
This incident field has both even and odd components.
Again we see that PQR and MPQR do not work well for this problem. In contrast, the relative error exhibited by QPAX is \( \OO(\eps) \) as expected.

%
% Fig. 7 -- relative error for QPAX as a function of 2N and epsilon
% for a plane wave source
%
\begin{figure}[!hbt]
    \centering
    \begin{subfigure}{0.49\textwidth}
        \centering
        \includegraphics{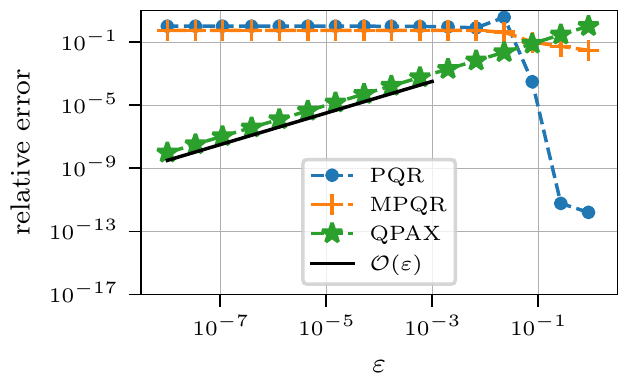}
        \caption{Approximate plane wave \cref{eq:u_inc_mat}}%
        \label{fig:7a}
    \end{subfigure}
    \begin{subfigure}{0.49\textwidth}
        \centering
        \includegraphics{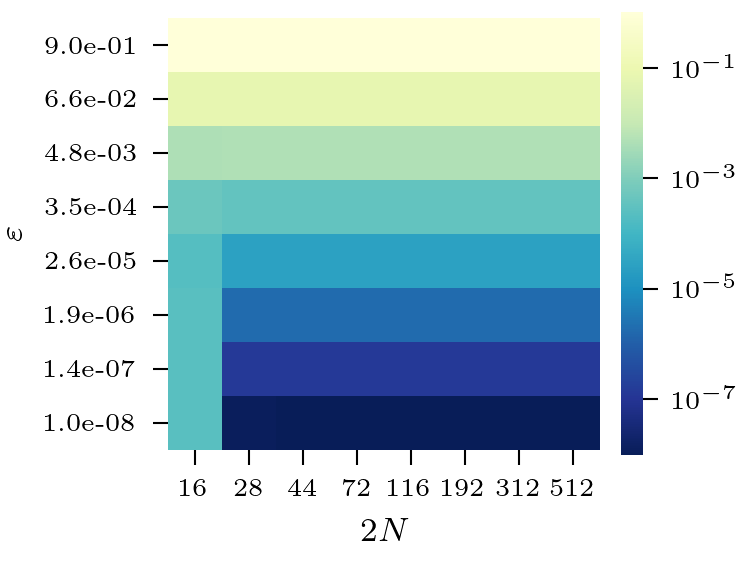}
        \caption{Relative error for QPAX}%
        \label{fig:7b}
    \end{subfigure}
    \caption{%
        (Left) Relative error as in \cref{fig:5}, but for the approximate plane wave incident field given in \cref{eq:u_inc_mat}.
        (Right) The relative error for QPAX (log scale) plotted as a function of the number of quadrature points, \( 2N \), and the high aspect ratio parameter, \( \eps \), for the approximate plane wave incident field given in \cref{eq:u_inc_mat}.}%
    \label{fig:7}
\end{figure}

It appears that the QPAX approximation is effective for studying scattering by a high aspect ratio ellipse.
Thus, we evaluate its relative error with respect to the infinity norm as a function of \( \eps \) and \( 2N \) in \cref{fig:7b}.
Results show the effectiveness of the QPAX approximation over a range of aspect ratios and computational resolutions.
The relative error for \( 2N = 16 \) appears to saturate as \( \eps \to 0^+ \).
For that case, the fields on the boundary are underresolved with that number of quadrature points leading to a dominating aliasing error.
For all other cases, we observe that the relative error behaves as \( \OO(\eps) \) as \( \eps \to 0^+ \).
Results in \cref{fig:8} show that the performance of the QPAX method does not depend on the direction \( d = (\cos(\alpha), \sin(\alpha)) \) of the incident field. We consider \( u^\inc \) defined in \cref{eq:u_inc_mat} where we choose \( \alpha_m^\inc = 2\, \ic^m\, \ce_m(\frac{\pi}{2} -\alpha, q) \) and \( \beta_m^\inc = 2\, \ic^m\, \se_m(\frac{\pi}{2} - \alpha, q) \), \( \alpha \in [0, \frac{\pi}{2}] \). Note that the choice \( \alpha = \frac{\pi}{2} \) is a special case: the incident field is even with respect to the major axis of the ellipse, therefore the odd part of the solution vanishes, leading to an \( \OO(\eps^2) \) relative error.
Thus, the QPAX approximation is highly effective (accurate) and efficient (requiring modest resolution) for scattering problems by a high aspect ratio ellipse.

%
% Fig. 8 -- relative error for QPAX for different direction
% of the incident field
%
\begin{figure}[!hbt]
    \centering
    \begin{subfigure}{0.49\textwidth}
        \centering
        \includegraphics{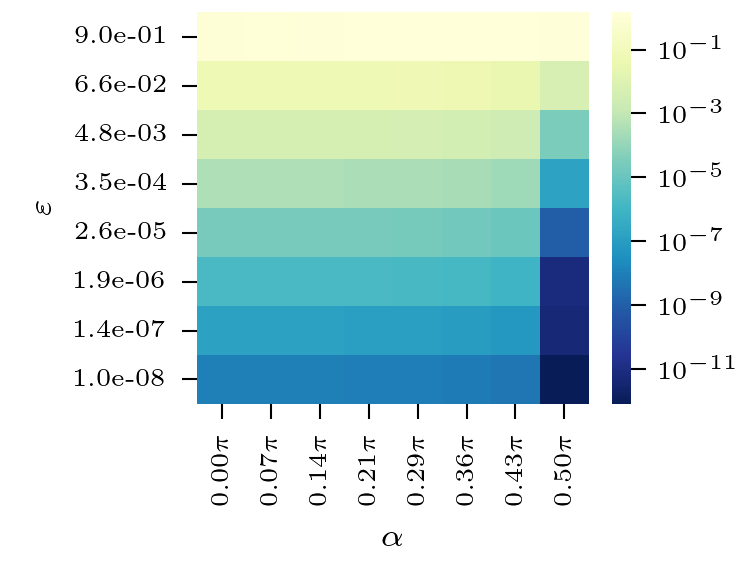}
        \caption{Relative error for QPAX with respect to \( \alpha \), \( \eps \).}
    \end{subfigure}
    \begin{subfigure}{0.49\textwidth}
        \centering
        \includegraphics{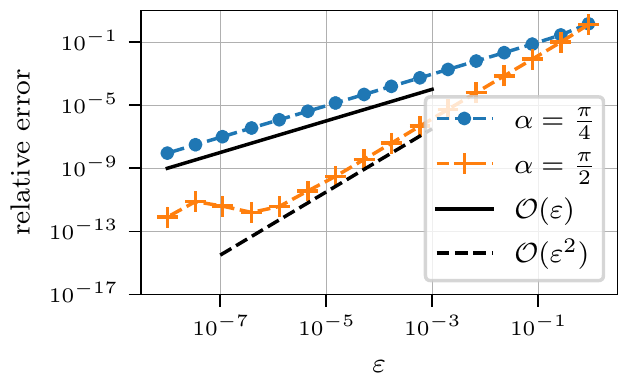}
        \caption{Plane waves of incidence \( \alpha = \left\lbrace \frac{\pi}{4}, \frac{\pi}{2} \right\rbrace \)}
    \end{subfigure}
    \caption{%
        (Left) Plot of the relative error for the QPAX method with respect to \( 0 \le \alpha \le \frac{\pi}{2} \), \( 10^{-8} \le \eps \le 0.9 \), and \( 2N = 64 \).
        (Right) Plot of the relative error for the QPAX method with respect to \( \alpha = \left\lbrace \frac{\pi}{4}, \frac{\pi}{2} \right\rbrace \), \( 10^{-8} \le \eps \le 0.9 \), and \( 2N = 64 \).}%
    \label{fig:8}
\end{figure}

%============================
\section{Extensions}\label{sec:extensions}
%============================

The results discussed above highlight the crucial role of parity for
scattering by a high aspect ratio sound-hard ellipse. We show here
that parity is important for other related problems including the
scattering problem for sound-soft high aspect ratio ellipse, the
transmission problem for a penetrable high aspect ratio ellipse, and
scattering problems for more general high aspect ratio
particles. For each of these problems, we identify their key
features and find that they have been addressed in our discussion of
the sound-hard problem.
\subsection{Sound-soft high aspect ratio ellipse}
The scattering problem for a sound-soft high aspect ratio ellipse is
\begin{equation}
    \begin{aligned}
         & \text{Find \( u =  u^\inc + u^\sca \in \Cscr^2(E) \cup
            \Cscr^1(\Rbb^2 \setminus {D}) \) such that: }
        \\
         & \begin{dcases}
            \Delta u + k^2 u = 0 & \text{in } E,          \\
            u = 0                & \text{on } \partial D, \\
            \lim \limits_{r \to \infty} \int_{|x| = r} \abs{\partial_n
                u^\sca - \ic k u^\sca}^2 \di{\sigma} = 0.
        \end{dcases}
    \end{aligned}\label{eq:sound-soft}
\end{equation}
Applying representation formula \cref{eq:u} to this problem, we obtain
\begin{equation}\label{eq:ud2}
    u(x) = u^\inc(x)
    + \int_{\partial D}G(x,y)  \partial_{n_y} u(y) \di{\sigma_{y}},
    \qquad x \in E,
\end{equation}
from which we determine that the unknown field \( \partial_{n_y} u \) on
the boundary satisfies
\begin{equation}\label{eq:ubie-soft}
    \frac{1}{2} \partial_{n_{x^b}}u(x^b) - \int_{\partial D} \partial_{n_{x^b}}
    G(x^b,y)  \partial_{n_{y}}u(y) \di{\sigma_{y}} = \partial_{n_{x^b}}u^\inc(x^b),
    \qquad x^b \in \partial D.
\end{equation}
When \( y = y(s) \) for \( s \in \Tbb \), we find that
\[
    \begin{aligned}
        \int_{\partial D}\partial_{n_{x^b}}G(x^b, y)\  \partial_{n_{y}}u(y)
        \di{\sigma_{y}}
         & = \int_{\Tbb} \partial_{n_x^b} G(y(s), y(t))\   \partial_{n_{y}}
        u(y(t))\ |y'(t)| \di{t},                                            \\
         & =  -  \frac{1}{ |y'(s)|}\int_{\Tbb}K(s, t; \eps )\  v(t) \di{t},
    \end{aligned}
\]
with \( v(t) \coloneqq \partial_{n_{y}} u(y(t))\ |y'(t)| \), and \( K \) given in
\cref{eq:KL}. Then \cref{eq:ubie-soft} becomes
\begin{equation}\label{eq:ubie-soft2}
    \frac{1}{2} v(s)  +\int_{\Tbb}K(s, t; \eps )\  v(t) \di{t} = v^\inc(s),
    \qquad s \in \Tbb,
\end{equation}
with \( v^\inc(s) = \partial_{n_{x^b}}u^\inc(y(s)) |y'(s)| \). Instead of
\( \partial_n u \), we have used the smoother unknown \( v \) because it is
better behaved at the degenerate points for this problem in the limit
as \( \eps \to 0 \). Rewriting \cref{eq:ubie-soft2} using our previous
operator notation, we have \( v \) satisfing \( \Lscr^H_D[v] = v^\inc \)
with \( \Lscr^H_D \coloneqq \frac{1}{2}\Irm + \Kscr^H \). The
resulting boundary integral equation is nearly identical to
\cref{eq:ubie}, except for the sign in front of the integral
operator.

Using \cref{lem:asy_KPsi} and~\cref{lem:asy_Kln} we write
\( \Lscr^H_D = \Hscr_0^D - \eps \, \Hscr_1 + \oo(\eps) \), with
\( \Hscr_0^D[v](s) = v_\odd(s) \), and \( \Hscr_1 \) defined in
\cref{eq:H_asy} from which we determine that the even part of \( v \) is
problematic. More precisely, ill-posedness is due to the even part
of \( v^{\inc} \) because it requires a different scaling.  Following
the same procedure as before, we compute the expansions of the
source term \( f \coloneqq v^\inc \) and find
\begin{subequations}
    \begin{align}
        f_0^\eve(s) & = 0, \qquad  f_1^\eve(s)  = \cos^2(s) \partial_{xx}
        u^\inc (0,\sin(s)) +  \sin(s) \partial_{y} u^\inc
        (0,\sin(s)), \label{eq:fd_ev}
        \\
        f_0^\odd(s) & = \cos(s) \partial_x u^\inc (0,\sin(s)) ,
        \qquad f_1^\odd(s)  = 0. \label{eq:fd_od}
    \end{align}
\end{subequations}
Using those expansions, we then compute
\begin{subequations}\label{eq:system_parity_solve_scatt_2_dirichlet}
    \begin{align}
        v_\eve^\asy(s)
         & = -  \Hscr_1^{-1} \left[ \cos^2(s) \partial_{xx} u^\inc
            (0,\sin(s)) +  \sin(s) \partial_{y} u^\inc (0,\sin(s)) \right],
        \\[
        1ex]
        v_\odd^\asy(s)
         & =  \cos(s) \partial_x u^\inc (0,\sin(s))  +  \eps \, \Hscr_1
        \left[ \cos(s) \partial_x u^\inc(0, \sin(s)) \right].
    \end{align}
\end{subequations}
Results from applying this procedure for this sound-soft problem are
shown in \cref{fig:dirichlet} and
\cref{fig:dirichlet_plane_wave}. They illustrate the different
asymptotic behaviors of even and odd source terms. Moreover, they
demonstrate the efficacy of QPAX to solve this problem.

\begin{figure}[hbt]
    \centering
    \begin{subfigure}{0.49\textwidth}
        \centering
        \includegraphics{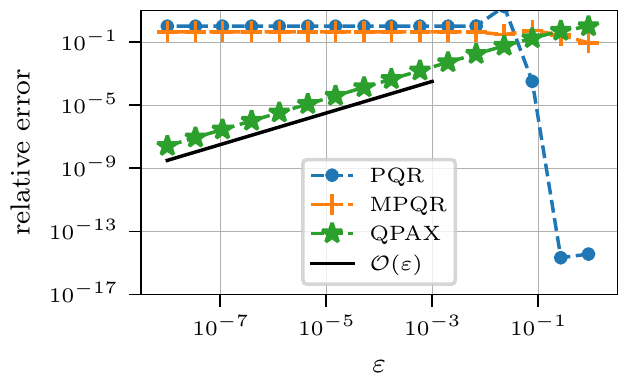}
        \caption{Even source: \( u^\inc(x, y) = \Mce{1}_3(\xi, q) \ce_3(\eta, q) \)}
    \end{subfigure}
    \begin{subfigure}{0.49\textwidth}
        \centering
        \includegraphics{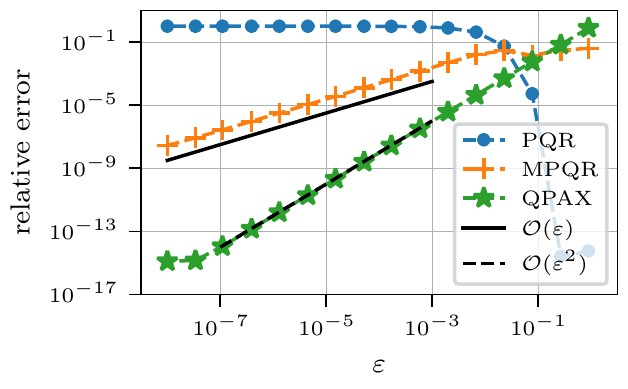}
        \caption{Odd source: \( u^\inc(x, y) = \Mse{1}_2(\xi, q) \se_2(\eta, q) \)}
    \end{subfigure}
    \caption{Results for relative error as a function of \( \eps \) in
        the numerical solution of the boundary integral equation for
        scattering by a sound-soft, high aspect ratio ellipse for PQR
        (blue curves with ``\( \bullet \)'' symbols), MPQR (orange
        curves with ``+'' symbols), and QPAX (green curves with
        ``\( \star \)'' symbols).  All of these results were computed
        using \( 2N = 64 \) quadrature points. The left plot shows
        results for an incident field that produces an even source on
        the boundary and the right plot shows results for an incident
        field that produces an odd source on the boundary.  The black
        line represents the \( \OO(\eps) \) convergence slope and the
        dashed black line represents the \( \OO(\eps^2) \) convergence
        slope.}%
    \label{fig:dirichlet}
\end{figure}

\begin{figure}[!hbt]
    \centering
    \begin{subfigure}{0.49\textwidth}
        \centering
        \includegraphics{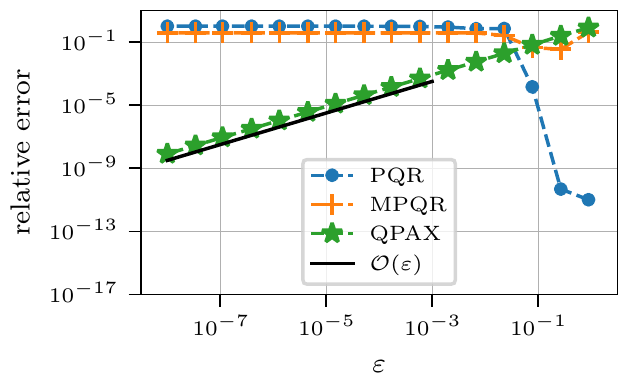}
        \caption{Approximate plane wave \cref{eq:u_inc_mat}}%
        \label{fig:dira}
    \end{subfigure}
    \begin{subfigure}{0.49\textwidth}
        \centering
        \includegraphics{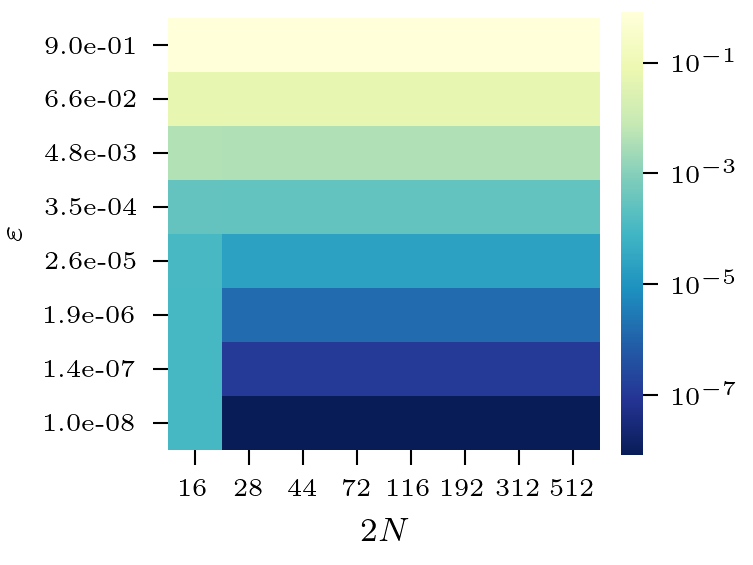}
        \caption{Relative error for QPAX}%
        \label{fig:dirb}
    \end{subfigure}
    \caption{(Left) Relative error as in \cref{fig:dirichlet}, but for the
        approximate plane wave incident field given in
        \cref{eq:u_inc_mat}.  (Right) The relative error for QPAX (log
        scale) plotted as a function of the number of quadrature points,
        \( 2N \), and the high aspect ratio parameter, \( \eps \), for
        the approximate plane wave incident field given in
        \cref{eq:u_inc_mat}.}%
    \label{fig:dirichlet_plane_wave}
\end{figure}

\subsection{Penetrable high aspect ratio ellipse}

Consider a penetrable high aspect ratio ellipse characterized by some (optical) property \( \varsigma_- \in \mathbb{C} \).
Typically, \( \varsigma_- \) represents the inverse of the permittivity for that particular medium.
It can be a plasmonic medium (noble metal such as gold or silver) with \( \Re\plr{\varsigma_-} < 0 \), or a dielectric medium with \( \Re\plr{\varsigma_-} > 0 \).
The ellipse is surrounded by vacuum (characterized by \( \varsigma_+ = 1 \)).
The scattering problem for this penetrable ellipse is given by the following transmission problem:
\begin{equation}\label{eq:penetrable}
    \begin{aligned}
         & \text{Find \( u_+  \in \Cscr^2(E) \cup \Cscr^1(\Rbb^2 \setminus
            {D}) \), \( u_- \in \Cscr^2(D) \cup \Cscr^1(\bar{D}) \) such that:
        }
        \\
         & \begin{dcases}
            \Delta u_+ + k^2_+ u_+ = 0                                   & \text{in } E,          \\
            \Delta u_- + k_-^2 u_- = 0                                   & \text{in } D,          \\
            u_+ = u_-                                                    & \text{on } \partial D, \\
            \varsigma_+ \partial_n u_+ =      \varsigma_- \partial_n u_- & \text{on } \partial D, \\
            \lim \limits_{r \to \infty} \int_{|x| = r} \abs{\partial_n
                u^\sca_+ - \ic k_+ u^\sca_+}^2 \di{\sigma} = 0.
        \end{dcases}
    \end{aligned}
\end{equation}
with \( k_\pm = k/\sqrt{\varsigma_\pm} \) and \( u_\pm = u^\inc + u^\sca_\pm \). We assume that \( (\varsigma_-, \varsigma_+) \) are such that the problem is well-posed.

Using the representation formula and the transmission conditions in \cref{eq:penetrable}, we find that, for \( x \in E \) the exterior total field is
\begin{subequations}\label{eq:u_penetrable}
    \begin{equation}
        u_+(x) = u^\inc(x)
        + \int_{\partial D} \partial_{n_y} G^+(x,y)  u_+(y) \di{\sigma_{y}} -   \int_{\partial D}G^+(x,y)  \partial_{n_y} u_+(y) \di{\sigma_{y}}
    \end{equation}
    and for \( x \in D \) the interior field is
    \begin{equation}
        u_-(x) =- \int_{\partial D} \partial_{n_y} G^-(x,y)  u_+(y) \di{\sigma_{y}} + \frac{\varsigma_+}{\varsigma_-}\int_{\partial D}G^-(x,y)  \partial_{n_y} u_+(y) \di{\sigma_{y}}
    \end{equation}
\end{subequations}
with \( G^\pm(x,y) = \frac{\ic}{4} \Ho_0(k_\pm |x-y|) \).
When \( y = y(s) \) for \( s \in \Tbb \), we find that the unknown fields on the boundary, \( (u_+(s), v_+(s)) \coloneqq (u_+(y(s)), \partial_{n_y} u_+(y(s))|y'(s)|) \), satisfy the system
\begin{equation}\label{eq:ubie-penetrable}
    \begin{pmatrix}
        \frac{1}{2}\Irm - \Kscr^H_+ & \Sscr^H_+
        \\[1ex]
        \frac{1}{2}\Irm + \Kscr^H_- & - \frac{\varsigma_+}{\varsigma_-} \Sscr^H_-
    \end{pmatrix}
    \begin{bmatrix}
        u_+(s) \\[1ex]
        v_+(s)
    \end{bmatrix}
    = \begin{bmatrix}
        u^\inc(s) \\[1ex]
        0
    \end{bmatrix}, \quad s \in \Tbb,
\end{equation}
with
\( \Sscr^H_\pm [v](s) = \displaystyle \frac{\ic}{4} \int_{\Tbb}
\Ho_0(k_\pm \ r(s, t; \eps ))\ v(t) \di{t} \), and \( \Kscr^H_\pm \)
denoting the operator \( \Kscr^H \) with either \( k = k_\pm \).

For a high aspect ratio ellipse, \cref{eq:ubie-penetrable} contains both double-layer potentials defined in \cref{eq:ubie,eq:ubie-soft2}.
Consequently, \cref{eq:ubie-penetrable} combines features from the sound-hard and the sound-soft cases indicating that there is (i) an underlying nearly singular behavior to address, and (ii) different parity scalings are necessary to accurately compute the field on the boundary.
System \cref{eq:ubie-penetrable} also has the single-layer potentials \( \Sscr^H_\pm \).
These single-layer potentials do not contribute to the nearly singular behavior in the limit \( \eps \to 0 \) at the mirror points \( s + t_s \equiv \pi \, [2\pi] \), however \( \Sscr^H_\pm [v](s) \) are weakly singular integrals on \( s = t \).
This singularity is well understood (see~\cite{Kress2014}), and one may use PQR to address that weak singularity.
To address the inherent parity issues associated with the double-layer potentials in \cref{eq:ubie-penetrable}, we may compute asymptotic expansions using \cref{lem:asy_KPsi} and \cref{lem:asy_Kln}.
With those expansions, we may properly scale \cref{eq:ubie-penetrable} and then apply a straight-forward extension of QPAX to compute an approximation solution.

\subsection{More general high aspect ratio particles}

Thus far, we have only considered a high apsect ratio ellipse because its simple explicit parameterization allows for a complete analysis of the problem.
We now consider a more general high aspect ratio particle whose closed, smooth boundary can be parameterized by a \( T \)-periodic curve: \( y(t) = (\eps y_1(t), y_2(t)) \), for \( t \in \Tbb_T \) with \( \Tbb_T \coloneqq \Rbb / T\Zbb \).
While computations for general cases are necessarily more cumbersome, we show that the parity issues we have identified for an ellipse generalize in an intuitive way.

Consider scattering by a sound-hard high aspect ratio particle
\( D \) (not necessarily symmetric) shown in \cref{fig:shape_hyp}.
Boundary integral equation \cref{eq:ubie} is to be solved.  Using
the parameterization of the boundary given above, \cref{eq:ubie} is
\( \frac{1}{2} u(s) - \Kscr^H[u](s) = u^\inc(s) \), for \( s \in
\Tbb_T \).
We find that the kernel in \( \Kscr^H \) admits the following factorization,
\begin{equation}\label{eq:kernel-general}
    \begin{aligned}
         & \Kscr^H[u](s) = \frac{\ic k}{4} \int_\Tbb \abs{y'(t)} \frac{n(y(t)) \cdot (y(s)-y(t))}{\abs{y(s)-y(t)}} \Ho_1\plr{k \, \abs{y(s)-y(t)}} \, u(t) \di{t},
        \\
         & = \frac{\ic k \pi}{2} \int_\Tbb  \underset{ = K^L(s,t;\eps)}{\underbrace{\left[\frac{1}{2\pi}\abs{y'(t)} \frac{n(y(t)) \cdot (y(s)-y(t))}{\abs{y(s)-y(t)}^2} \right]}} \Ho_1\plr{k \, \abs{y(s)-y(t)}} \abs{y(s)-y(t)}\, u(t) \di{t}.
    \end{aligned}
\end{equation}
Written more explicitly, we have
\[
    \begin{aligned}
         & K^L(s,t;\eps)  =\frac{\eps}{2\pi}\frac{y'_2(s)(y_1(s)-y_1(t))-y'_1(s)(y_2(s)-y_2(t))}{\eps^2 {(y_1(s)-y_1(t))}^2 + {(y_2(s)-y_2(t))}^2} ,\quad s,t \in \Tbb_T.
    \end{aligned}
\]
Just as with the ellipse, \( K^{L} \) is the kernel in the double-layer
potential for Laplace's equation
for this boundary.
From previous results, we know that nearly singular behaviors arise at mirror points and weakly singular behaviors at degenerate points.
The nearly singular behaviors require parity treatment to accurately compute the field.

We require a generalization of mirror points and degenerate
points. To that end, we introduce the function
\( \sigma : \Tbb_T \to \Tbb_T \). A mirror point is one that
satisfies \( y_2(\sigma(t)) = y_2(t) \), \( t \in \Tbb_T \).
Let \( y(0) \) and \( y(t_\star) \), for \( t_\star \in (0, T) \), denote the ``bottom'' and ``top'' points of \( \partial D \), respectively, where \( y_2(0) = \min_{\Tbb_T} y_2 \), \( y_2(t_\star) = \max_{\Tbb_T} y_2 \), and \( y_2'(0) = 0 = y_2'(t_\star) \).
It follows that the set \( \{0, t_\star \} \) represents the degenerate points in the sense that \( \sigma (0) \equiv 0 \, [T] \), \( \sigma (t_\star ) \equiv t_\star \, [T] \) (see \cref{fig:shape_hyp}).

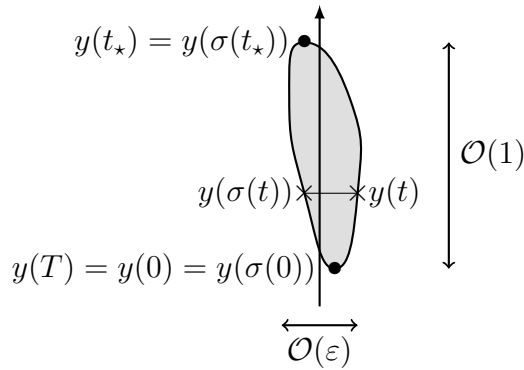
\begin{figure}[!hbtp]
    \centering
    \begin{tikzpicture}
        \filldraw [draw=black,thick,fill=gray!25]
        plot [smooth cycle, tension=1] coordinates {
                (-0.2,-0.5)
                (0.2,-1.5)
                (0.5,-0.5)
                (0.4,0.75)
                (-0.2,1.5)
                (-0.4,0.75)
            };
        \draw[thick,->,>=latex] (0,-2) -- (0,2);
        \draw[thick,<->] (-0.5,-2.25) -- (0.5,-2.25);
        \draw (0,-2.25) node [below] {\( \OO(\eps) \)};
        \draw[thick,<->] (1.7,-1.5) -- (1.7,1.5);
        \draw (1.7,0) node [right] {\( \OO(1) \)};
        \draw (0.2,-1.5) node {\( \bullet \)};
        \draw (0.1,-1.5) node [left] {\( y(T) = y(0) = y (\sigma(0)) \)};
        \draw (-0.2,1.5) node {\( \bullet \)};
        \draw (-0.25,1.5) node [left] {\( y(t_\star) = y(\sigma(t_\star)) \)};
        \draw (-0.2,-0.5) node {\( \times \)} -- (0.5,-0.5) node {\( \times \)};
        \draw (-0.2,-0.5) node [left] {\( y(\sigma(t)) \)};
        \draw (0.5,-0.5) node [right] {\( y(t) \)};
    \end{tikzpicture}
    \caption{Scheme representing a general high aspect ratio particle with considered shape assumptions.}%
    \label{fig:shape_hyp}
\end{figure}

Just like we found for the ellipse, nearly singular behavior
arises at mirror points:
\begin{align*}
    K^L(s,t;\eps)
     & \sim \frac{1}{ 2 \pi \eps} \frac{y_2'(\sigma(s))}{y_1(s)-y_1(\sigma(s))},
     &
     & \text{as } t \to \sigma(s) \text{ for } s \in \Tbb_T \setminus
    \{ 0, t_\star \}.
\end{align*}
Additionally, weakly singular behavior arises at the degenerate points:
\begin{align*}
    K^L(s,t;\eps)
     & \sim -\frac{1}{4 \pi \eps} \frac{y_2''(\sigma(s))}{y_1'(\sigma(s))},
     &
     & \text{as } t \to \sigma(s) \text{ for } s \in \{ 0, t_\star \},
\end{align*}
and the derivative of the kernel in \cref{eq:kernel-general} admits a singularity for \(s \in \{ 0, t_\star \} \).
The more general high aspect ratio particle has \emph{exactly the same structural features} as the ellipse. More precisely, at first order we have \( \frac{1}{2} u(s) - \Kscr^H[u](s) = u_\eve(s) + \OO(\eps) \). In that regard, we make the following two remarks.
\begin{itemize}%[leftmargin=*]
    \item As long as one can define mirror points with respect to the
          major axis of the obstacle \( D \), one can always factor out Laplace's
          kernel \( K^{L} \) and identify its nearly singular behavior at
          those mirror points.
    \item To address the nearly singular behavior, one can proceed as
          presented in \cref{sec:QPAX}. The decomposition
          \( \Kscr^H = \Kscr^\Psi + \Kscr^{\ln} \) with
          \( \Kscr^\Psi,\Kscr^{\ln} \) defined in \cref{eq:KPsi},
          \cref{eq:Kln} remains valid, and similar results as in
          \cref{lem:asy_KPsi}, \cref{lem:asy_Kln} can be derived. That means that
          even and odd parts still require a different scaling, and one can
          use QPAX to compute accurately the solution.
\end{itemize}

\section{Conclusions}%
\label{sec:conclusion}

We have studied two-dimensional sound-hard scattering by a high aspect ratio ellipse using boundary integral equations.
In the limit as \( \eps \to 0^+ \) (where \( \eps^{-1} \) denotes the aspect ratio), the integral operator in the boundary integral equation exhibits nearly singular behavior corresponding to the collapsing of the ellipse to a line segment.
This nearly singular behavior leads to large errors when solving the boundary integral equation.

By performing an asymptotic analysis of the integral operator for a high aspect ratio ellipse we find that two different scalings due to parity are needed to solve the boundary integral equation.
By introducing two different asymptotic expansions for even and odd parity solutions, we identify the leading behavior of the solution.
We first study the Dirichlet problem for Laplace's equation in a high aspect ratio ellipse to identify these behaviors.
Upon determining the correct asymptotic expansions for the different parities, we introduce a numerical method which we call Quadrature by Parity Asymptotic eXpansions (QPAX) that effectively and efficiently solves this problem. We then extend QPAX for the scattering problem.

Our results show that the relative error (defined with respect to the
infinity norm) exhibits an \( \OO(\eps^{2}) \) behavior for even parity
problems, and an \( \OO(\eps) \) behavior for odd parity problems.
Thus, in general, the relative errror for QPAX exhibits an
\( \OO(\eps) \) behavior.  In contrast, other methods we have used that
do not explicitly take into account the parity of solutions may or may
not work for even parity problems, but do not work for odd parity
problems.

By identifying the different asymptotic behaviors due
to parity and developing an effective and efficient method to
compute them, we have identified and addressed an elementary issue
in two-dimensional scattering by high aspect ratio particles.
Scattering by a sound-hard, high aspect ratio ellipse contains all
important features needed to tackle other problems. We have shown
that the high aspect ratio sound-soft and penetrable ellipse
have the same asymptotic behaviors. Moreover, we have shown that
scattering by a more general high aspect ratio particle also yields
two distinct asymptotic behaviors that must be addressed carefully.

It is likely that three-dimensional scattering problems for high
aspect ratio particles share the features that we have identified
here for two-dimensional scattering problems. It is certainly true
that axisymmetric high aspect ratio particles will yield the two
different asymptotic behaviors seen here since they effectively
reduce down to a two-dimensional problem. Interesting results have been obtained in the context of characterizing plasmonic resonances in slender-bodies~\cite{ruiz2019slender,deng2021mathematical,ando2020spectral}.
The exact details for a
general three-dimensional high aspect ratio particle remain to be
determined and constitute future work. Nonetheless, the results obtained here provide valuable
insight into that problem. For this reason, we believe that these
results are useful for further studies of scattering by high aspect
ratio particles and related applications.

% ---------------- %
% BEGIN APPENDICES %
% ---------------- %

\appendix

\section{Analytical solution for scattering by a sound-hard ellipse}%
\label{sec:Appendix-A}

Boundary value problem \cref{eq:sound-hard} can be solved analytically as follows.
We define the elliptical coordinates
\( (\xi, \eta) \in \Rbb_+^* \times \Tbb \) as
\[
    x = c_\eps \, \sinh(\xi)\, \sin(\eta) \quad
    \text{and} \quad
    y = c_\eps \, \cosh(\xi)\, \cos(\eta),
\]
where \( c_\eps = \sqrt{1-\eps^2} \) is a parameter defining the focus of the ellipse.
The boundary in these coordinates is \( \{ \xi_\eps \} \times \Tbb \), where \( \xi_\eps = \argth(\eps) \).
Using these elliptical coordinates, the Helmholtz equation becomes
\[
    (\partial_{\xi\xi} + \partial_{\eta\eta}) u^\sca + \frac{c_\eps^2
        k^2}{2} (\cosh(2\xi) - \cos(2\eta)) u^\sca = 0,
\]
it admits two sets of solutions: \( {\{ \Mce{i}_m(\xi, q)\ce_m(\eta, q) \}}_{m \ge 0} \), \( {\{ \Mse{i}_m(\xi, q)\se_m(\eta, q) \}}_{m \ge 1} \) with \( q = c_{\eps}^{2} k^{2}/4 \) and \( m \) integer.
Here, \( \ce \) and \( \se \) denote the angular Mathieu functions of order \( m \), and \( \Mce{i} \) and \( \Mse{i} \) denote the radial Mathieu function of order \( m \) and \( i^{\text{th}} \)-kind. Using~\cite[Sec.~28.20(iii)]{Nist}, we find that only \( \Mce{3}_m \) and \( \Mse{3}_m \) satisfy the Sommerfeld radiation condition.
Thus, the scattered field is given by
\begin{equation}\label{eq_u_ana}
    u^\sca(\xi, \eta) = \sum_{m = 0}^{\infty} \alpha_m\Mce{3}_m(\xi, q)
    \ce_m(\eta, q) + \sum_{m = 1}^{\infty} \beta_m\Mse{3}_m(\xi, q)
    \se_m(\eta, q),
\end{equation}
with the coefficients \( \alpha_{m} \) and \( \beta_{m} \) to determine by requiring a sound-hard boundary condition on \( \partial D \) to be satisfied.
For a given incident field, \( u^\inc(\xi_{\eps},\eta) \), we have for the sound-hard problem,
\[
    \alpha_m = \frac{-1}{{\Mce{3}_m}'(\xi_\eps, q)\, \pi} \int_0^{2\pi}
    \sqrt{{\sin(\eta)}^2 + \eps^2 {\cos(\eta)}^2} \,
    \partial_{n}u^\inc(\xi_{\eps},\eta) \,
    \ce_m(\eta, q) \di{\eta}
\]
and
\[
    \beta_m = \frac{-1}{{\Mse{3}_m}'(\xi_\eps, q)\, \pi} \int_0^{2\pi}
    \sqrt{{\sin(\eta)}^2 + \eps^2 {\cos(\eta)}^2} \,
    \partial_{n}u^\inc(\xi_{\eps},\eta) \,
    \se_m(\eta, q) \di{\eta}.
\]

\section{Matched asymptotic expansions}%
\label{sec:Appendix-B}

In this Appendix we provide matched asymptotic expansions of the double-layer potential for Laplace and Helmholtz equation.

\subsection{Proof of \texorpdfstring{\cref{lem:asy_KL}}{\ref{lem:asy_KL}}}%
\label{ssec:KL_asy}

\begin{proof}
    We rewrite \cref{eq:int_op_Kl} as
    \[
        \Kscr^L[\mu](s) = -\frac{\eps}{2\pi} \int_{-s}^{2 \pi -s}
        \frac{\mu(t)}{1+\eps^2 - (1-\eps^2)\cos(s +t)} \di{t},
    \]
    so that when using the change of variable \( t = \pi-s+x \) we have
    \[
        \Kscr^L[\mu](s) = -\frac{\eps}{2\pi} \int_{-\pi}^\pi
        \frac{\mu(\pi-s+x)}{1+\eps^2 - (1-\eps^2)\cos(x)} \di{x}.
    \]
    The above operator is then a nearly singular integral about \( x \equiv 0 \, [2\pi] \).
    Introducing \( \delta > 0 \) such that \( \eps = \oo(\delta) \), we split the integral \( \Kscr^L[\mu](s) = \Kscr^{\mathrm{inn}}[\mu](s) + \Kscr^{\mathrm{out}}[\mu](s) \) with
    \begin{subequations}\label{eq:io_term}
        \begin{align}
            \Kscr^{\mathrm{inn}}[\mu](s)
             & \coloneqq \frac{-\eps}{2\pi}
            \int_{-\frac{\delta}{2}}^\frac{\delta}{2}
            \frac{\mu(\pi-s+x)}{1+\eps^2 - (1-\eps^2)\cos(x)} \di{x},
            \\
            \Kscr^{\mathrm{out}}[\mu](s)
             & \coloneqq \frac{-\eps}{2\pi} \int_{I_\delta}
            \frac{\mu(\pi-s+x)}{1+\eps^2 - (1-\eps^2)\cos(x)} \di{x},
        \end{align}
    \end{subequations}
    with \( I_\delta = (-\pi, \pi) \setminus \plr{-\tfrac{\delta}{2}, \tfrac{\delta}{2}} \) and we look for the leading order terms of the above integrals as \( \delta, \eps \to 0^+ \).
    Note that the inner term \( \Kscr^{\mathrm{inn}}[\mu](s) = I_\delta(\eps) \) defined in \cref{sec:nearly-singular-laplace}, then following the same procedure (using the change of variable \( x = \eps X \) and expanding about \( \eps = 0 \) then \( \delta = 0 \)), we obtain
    \begin{align}
        \Kscr^{\mathrm{inn}}[\mu](s)
         & = -\frac{1}{\pi} \arctan\left(\frac{\delta}{4\eps}\right)
        \mu(\pi-s) + \OO(\delta \eps) \nonumber
        \\
         & = -\frac{1}{2} \mu(\pi-s) + \frac{4}{\pi} \mu(\pi-s)
        \frac{\eps}{\delta} + \OO\left(\frac{\eps^3}{\delta^3}\right) +
        \OO(\delta \eps). \label{eq:inn_expan}
    \end{align}
    For the outer term \( \Kscr^{\mathrm{out}}[\mu](s) \), we first expand the integrand for \( x \not \equiv 0 \, [2\pi] \), leading to
    \begin{align}\label{eq:out_e_expan}
        \Kscr^{\mathrm{out}}[\mu](s)
         & = \frac{-\eps}{2\pi} \int_{I_\delta}
        \frac{\mu(\pi-s+x)}{1 - \cos(x)} \left[ 1 +
            \OO\left(\frac{\eps^2}{{\tan(\tfrac{x}{2})}^2}\right) \right]
        \di{x} \nonumber
        \\
         & = \frac{-\eps}{2\pi} \int_{I_\delta} \frac{\mu(\pi-s+x)}{1
            - \cos(x)} \di{x} + \OO\left(\frac{\eps^3}{\delta^3}\right).
    \end{align}
    Let us note that \( x \mapsto {(1 - \cos(x))}^{-1} \) is an even function (in the classic sense) over \( I_\delta \). We define \( \nu(x) = \mu(\pi-s+x) \) and we split \( \nu = \nu_+ + \nu_- \) into its \emph{classic} even and odd parts
    \( \nu_\pm = \frac{1}{2}(\nu(x) \pm \nu(-x)) \), we then obtain
    \begin{align*}
        \int_{I_\delta} \frac{\nu(x)}{1 - \cos(x)} \di{x}
         & = \int_{I_\delta} \frac{\nu_+(x)}{1 - \cos(x)} \di{x}
        \\
         & = \nu_+(0) \int_{I_\delta} \frac{1}{1 - \cos(x)} \di{x}
        + \int_{I_\delta} \frac{\nu_+(x) - \nu_+(0)}{1 - \cos(x)} \di{x}
        \\
         & = \frac{2 \nu_+(0)}{\tan(\tfrac{\delta}{4})}
        + \int_{I_\delta} \frac{\nu_+(x) - \nu_+(0)}{1 - \cos(x)} \di{x}
    \end{align*}
    Note that \( \mathsf{E}_{\pi-s}[\mu] \) defined in \cref{eq:Es} extends the function
    \[
        x \mapsto \frac{\nu_+(x) - \nu_+(0)}{1 - \cos(x)}
        = \frac{\mu(\pi -s + x) - 2 \mu (\pi -s)+\mu(\pi -s -x)}{2(1 - \cos(x))}
    \]
    to a continuous function in \( \Cscr(\Tbb) \).
    We write (using some rescaling and expansion about \( \eps = 0 \))
    \begin{align*}
        \int_{I_\delta} \frac{\nu_+(x) - \nu_+(0)}{1 - \cos(x)} \di{x}
         & =  \int_{-\pi}^\pi  \mathsf{E}_{\pi -s}[\mu](x) \di{x}
        - \int_{-\frac{\delta}{2}}^\frac{\delta}{2} \mathsf{E}_{\pi-s}[\mu](x) \di{x}
        \\
         & =  \int_{-\pi}^\pi  \mathsf{E}_{\pi -s}[\mu](x) \di{x} +
        \OO(\delta)
    \end{align*}
    because \( \mathsf{E}_{\pi -s}[\mu] \) is bounded on
    \( \plr{-\frac{\delta}{2}, \frac{\delta}{2}} \).  Using the fact
    that
    \[
        \frac{2}{\tan(\tfrac{\delta}{4})}\nu_+(0) =
        8\mu(\pi-s)\, \frac{1}{\delta} + O(\delta),
    \] combining all of the
    above results we obtain
    \begin{equation}\label{eq:out_expan}
        \Kscr^{\mathrm{out}}[\mu](s)
        = -\frac{4}{\pi}\mu(\pi-s) \, \frac{\eps}{\delta}
        -\frac{\eps}{2\pi} \int_{-\pi}^\pi \mathsf{E}_{\pi -s}[\mu](x)\di{x}
        + \OO\left(\frac{\eps^3}{\delta^3}\right) + \OO(\delta \eps).
    \end{equation}
    Finally plugging \cref{eq:inn_expan} and \cref{eq:out_expan} into \( \Kscr^L[\mu](s) \), and choosing \( \delta \) such that \( \OO\left(\frac{\eps^3}{\delta^3}\right) + \OO(\delta \eps) = \oo(\eps) \) (for example \( \delta = \sqrt{\eps} \)) we obtain
    \[
        \Kscr^L[\mu](s) = -\frac{1}{2} \mu(\pi-s) -\frac{\eps}{2\pi}
        \int_{-\pi}^\pi \mathsf{E}_{\pi -s}[\mu](x) \di{x} + \oo(\eps).
    \]
\end{proof}

\subsection{Proof of \texorpdfstring{\cref{lem:asy_KPsi}}{\ref{lem:asy_KPsi}}}%
\label{ssec:Kpsi_asy}

\begin{proof}
    We have the expansion
    \begin{equation}\label{eq:ze_expan}
        z_\eps(s, t) = \begin{dcases}
            2\, k\, \left\lvert \sin\left(\tfrac{s-t}{2}\right)
            \cos\left(\tfrac{s+t}{2}\right) \right\rvert + \OO(\eps^2)
             & \text{if } s+t \not \equiv \pi \, [2\pi] \\
            2\, k\, |\cos(s)|\, \eps
             & \text{if } s+t \equiv \pi \, [2\pi]
        \end{dcases}
    \end{equation}
    using \( \Psi'(0) = 0 \), we get \( \Psi(z_\eps(s, t)) = \Psi\plr{ 2\, k\, \abs{\sin\plr{\tfrac{s-t}{2}} \cos\plr{\tfrac{s+t}{2}}} } + \OO(\eps^2) \).
    We obtain \( \Kscr^\Psi[\mu](s) = \Kscr^L[ \Psi\plr{ 2\, k\, \abs{\sin\plr{\tfrac{s-t}{2}} \cos\plr{\tfrac{s+t}{2}}} } \mu ] (s) + \OO\plr{ \eps^2 \Kscr^L[\mu](s) } \), then we apply \cref{lem:asy_KL} to \( \Kscr^L \) and use \( \Psi(0) = 1 \) to finish the proof.
\end{proof}

\begin{lemma}\label{lem:L1_spec}
    Consider \( \Lscr_1 \) defined in \cref{eq:opL_01}. For all \( m \in \Zbb \), we have
    \[
        \Lscr_1[\mathsf{e}^{\ic m t}](s) = -|m|\, \mathsf{e}^{\ic m (\pi-s)} .
    \]
\end{lemma}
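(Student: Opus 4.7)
The plan is to substitute $\mu(t) = \ex^{\ic m t}$ directly into the formula for $\Lscr_1$ given in \cref{eq:opL_01}, and then to reduce the calculation to a single classical trigonometric integral. Writing out the definition of $\mathsf{E}_s$ from \cref{eq:Es}, for $t \not\equiv 0 \, [2\pi]$ one has
\[
    \mathsf{E}_{\pi-s}[\ex^{\ic m \cdot}](t) = \frac{\ex^{\ic m(\pi-s+t)} - 2\ex^{\ic m(\pi-s)} + \ex^{\ic m(\pi-s-t)}}{2(1-\cos(t))} = \ex^{\ic m (\pi - s)} \cdot \frac{\cos(mt)-1}{1-\cos(t)},
\]
so the prefactor $\ex^{\ic m (\pi-s)}$ immediately pulls out of the integral. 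I would then observe that the remaining integrand $(1-\cos(mt))/(1-\cos(t))$ is in fact continuous on $\Tbb$ (its value at $t \equiv 0$ being $m^2$, in agreement with $\mu''(\pi-s)/\ex^{\ic m(\pi-s)} = -m^2$ via the alternate branch of \cref{eq:Es}), so no singular-integral technicalities arise.

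The heart of the proof is then the identity
\[
    \frac{1}{2\pi}\int_\Tbb \frac{1-\cos(mt)}{1-\cos(t)} \di{t} = |m|, \qquad m \in \Zbb.
\]
I would derive this using the Fejér kernel. Since $1-\cos(\alpha) = 2\sin^2(\alpha/2)$, the integrand equals $\sin^2(mt/2)/\sin^2(t/2)$. For $m \in \Nbb^*$ this is exactly $|m|$ times the Fejér kernel
\[
    F_{|m|}(t) = \frac{1}{|m|}\plr{\frac{\sin(|m|t/2)}{\sin(t/2)}}^2 = \sum_{k=-(|m|-1)}^{|m|-1} \plr{1-\frac{|k|}{|m|}} \ex^{\ic k t},
\]
whose integral over $\Tbb$ is $2\pi$ (only the $k=0$ term survives). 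The case $m=0$ gives zero trivially, and the case $m<0$ follows from the evenness of $\cos$. Combining, we conclude
\[
    \Lscr_1[\ex^{\ic m t}](s) = -\ex^{\ic m (\pi-s)} \cdot \frac{1}{2\pi}\int_\Tbb \frac{1-\cos(mt)}{1-\cos(t)} \di{t} = -|m| \, \ex^{\ic m (\pi-s)},
\]
as claimed.

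There is no real obstacle here once the Fejér kernel representation is invoked; the only thing to be careful about is justifying the interchange of the factor $\ex^{\ic m (\pi-s)}$ with the integral and handling the removable singularity at $t \equiv 0$, both of which are routine since the quotient $(1-\cos(mt))/(1-\cos(t))$ extends to a trigonometric polynomial of degree $|m|-1$ on $\Tbb$.
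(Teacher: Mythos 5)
Your proof is correct and follows essentially the same route as the paper: substitute $\mu(t) = \ex^{\ic m t}$, pull out the factor $\ex^{\ic m(\pi-s)}$, recognize $\sin^2(mt/2)/\sin^2(t/2)$ as $|m|$ times the Fej\'er kernel, and use that its integral over $\Tbb$ equals $2\pi$. The paper simply cites a reference for the Fej\'er kernel integral rather than deriving it, so you have supplied a bit more detail, but the argument is the same.
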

%------------
\begin{proof}
    From the expression of \cref{eq:opL_01}, we obtain
    \[
        \Lscr_1[\mathsf{e}^{\ic m t}](s)
        = -\frac{\ex^{\ic m (\pi-s)}}{2\pi} \int_\Tbb {\left[ \frac{\sin\plr{\frac{|m| t}{2}}}{\sin\plr{\frac{t}{2}}} \right]}^2 \di{t}.
    \]
    We recognize the Fej\'er kernel and we get the result using~\cite[Eq.~(1.15.16)]{Nist}.
\end{proof}

\subsection{Proof of \texorpdfstring{\cref{lem:asy_Kln}}{\ref{lem:asy_Kln}}}%
\label{ssec:say_Kln}

\begin{proof}
    Recall from \cref{eq:Kln} we have
    \[
        \Kscr^{\ln}[\mu](s) = -\frac{1}{2} \int_\Tbb K^L(s, t; \eps) \
        z_\eps(s, t) \bJ_1(z_\eps(s, t)) \ln\plr{ \frac{4 {z_\eps(s, t)}^2}{k^2} } \ \mu(t) \di{t} .
    \]
    We start by showing that \( K^L(s, t; \eps) \, z_\eps(s, t) \bJ_1(z_\eps(s, t)) \) is regular as \( \eps \to 0^+ \).
    From the definition of \( \bJ_1 \), see~\cite[Section.~10.2(ii)]{Nist}, there exists an analytic function \( \Phi \) such that \( \bJ_1(z) = z\Phi(z) \) and \( \Phi(0) = \tfrac{1}{2} \).
    Using \( \Phi \) \cref{eq:zeps,eq:ze_expan}, we get after using Taylor expansions about \( \eps = 0 \)
    \begin{align*}
        K^L(s, t; \eps) \, z_\eps(s, t) \bJ_1(z_\eps(s, t))
         & = K^L(s, t; \eps) \, {z_\eps(s, t)}^2 \, \Phi(z_\eps(s, t))
        \\
         & = -\frac{\eps k^2}{\pi} \sin\plr{\tfrac{s-t}{2}}^2 \Phi\left(
        2\, k\, \abs{ \sin\left(\tfrac{s-t}{2}\right)
            \cos\plr{\tfrac{s+t}{2}} } \right) + \OO(\eps^2).
    \end{align*}
    We then rewrite
    \begin{equation}\label{eq:zeps_log}
        \ln\plr{ \frac{4 {z_\eps(s, t)}^2}{k^2} }
        = \ln\plr{ 4 \sin\plr{\tfrac{s-t}{2}}^2 }
        + \ln\plr{ 4 \cos\plr{\tfrac{s+t}{2}}^2 + 4 \eps^2
            \sin\plr{\tfrac{s+t}{2}}^2 }.
    \end{equation}
    Using \cref{lem:asy_ln} and the function \( \phi_s(t) = 2 k^2 \sin\plr{\tfrac{s-t}{2}}^2 \Phi\plr{ 2\, k\, \abs{ \sin\plr{\tfrac{s+t}{2}} \cos\plr{\tfrac{s+t}{2}} } } \) introduced in \cref{eq:Phi_expression} we obtain
    \begin{align*}
        \Kscr^{\ln}[\mu](s)
         & = \frac{\eps}{2\pi} \left[
            \int_\Tbb \phi_s(t) \mu(t) \ln\plr{ 4\sin\plr{\tfrac{s-t}{2}}^2 } \di{t}
            + \int_\Tbb \phi_s(t) \mu(t) \ln\plr{ 4 \cos\plr{\tfrac{s+t}{2}}^2 } \di{t}
            \right]
        \\
         & = \frac{\eps}{2\pi} \int_\Tbb {[\phi_s \, \mu]}_\eve(t) \,
        \ln\plr{ 4 \sin\plr{\tfrac{s-t}{2}}^2 } \di{t}
    \end{align*}
    where we used the change of variable \( t = \pi-x \) in the second integral, leading to the even representation of \( \phi_s \, \mu \).
\end{proof}

\begin{lemma}\label{lem:asy_ln}
    For \( f \in \Cscr^\infty(\Tbb) \), and \( s \in \Tbb \), we have the following asymptotic
    expansion
    \[
        \int_\Tbb f(t)\, \ln\plr{ 4 \cos\plr{\tfrac{s+t}{2}}^2 + 4 \eps^2
            \sin\plr{\tfrac{s+t}{2}}^2 } \di{t} = \int_\Tbb f(t)\, \ln\plr{
            4 \cos\plr{\tfrac{s+t}{2}}^2 } \di{t} + \oo(1)
    \]
    as \( \eps \to 0^+ \).
\end{lemma}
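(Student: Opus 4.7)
The plan is to collapse the two integrals into one by combining the logarithms and then apply the dominated convergence theorem. Concretely, I would define
\[
    D(\eps) \coloneqq \int_\Tbb f(t)\, \ln\plr{ 4 \cos\plr{\tfrac{s+t}{2}}^2 + 4 \eps^2 \sin\plr{\tfrac{s+t}{2}}^2 } \di{t}
    - \int_\Tbb f(t)\, \ln\plr{ 4 \cos\plr{\tfrac{s+t}{2}}^2 } \di{t}
\]
and, using $\ln(A) - \ln(B) = \ln(A/B)$ valid wherever both logarithms are finite, rewrite it as
\[
    D(\eps) = \int_\Tbb f(t)\, \ln\plr{ 1 + \eps^2 \tan\plr{\tfrac{s+t}{2}}^2 } \di{t},
\]
with the convention that the integrand is set to $0$ at the two points where $\cos\plr{\tfrac{s+t}{2}} = 0$ (a null set). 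It then suffices to prove $D(\eps) \to 0$ as $\eps \to 0^+$.

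The first step is pointwise convergence: for every $t \in \Tbb$ with $t \not\equiv \pi - s \, [2\pi]$, the quantity $\tan\plr{\tfrac{s+t}{2}}$ is finite, so $\ln\plr{ 1 + \eps^2 \tan\plr{\tfrac{s+t}{2}}^2 } \to 0$ as $\eps \to 0^+$. This holds almost everywhere on $\Tbb$.

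The second and key step is producing an $\eps$-independent integrable majorant. For $\eps \in (0,1]$, monotonicity of $\ln$ together with the identity $1 + \tan^2 = \sec^2$ gives
\[
    0 \le \ln\plr{ 1 + \eps^2 \tan\plr{\tfrac{s+t}{2}}^2 }
    \le \ln\plr{ \sec\plr{\tfrac{s+t}{2}}^2 }
    = -2 \ln\abs{ \cos\plr{\tfrac{s+t}{2}} }.
\]
Since $f \in \Cscr^\infty(\Tbb)$ is bounded on the compact torus, the dominating function $t \mapsto 2\, \norm{f}_{\Lrm^\infty(\Tbb)} \, \abs{ \ln \abs{ \cos\plr{\tfrac{s+t}{2}} } }$ has only a logarithmic singularity at the single zero $t = \pi - s$, which is classically integrable (near this point, $\abs{\cos\plr{\tfrac{s+t}{2}}} \sim \tfrac{1}{2}\abs{t - (\pi-s)}$, and $\int_0^1 \abs{\ln u} \di{u} < \infty$).

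Dominated convergence then yields $D(\eps) \to 0$, which is exactly the $\oo(1)$ remainder claimed. The only subtle point in the argument is the integrability of the majorant, but the logarithmic singularity of $\ln\abs{\cos}$ is elementary, so this is not a genuine obstacle; the rest of the argument is purely mechanical.
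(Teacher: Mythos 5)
Your proof is correct, and it takes a genuinely different and more elementary route than the paper's. The paper proves this lemma using matched asymptotic expansions: after the substitution \( t = \pi - s + x \) it splits the integral at a scale \( \delta \) with \( \eps = \oo(\delta) \), bounds the inner piece by \( \OO(\delta\abs{\ln\eps}) \), Taylor-expands the logarithm in the outer piece to get an \( \OO(\eps^2/\delta^3) \) remainder, and finally picks \( \delta = \sqrt{\eps} \) so that all error terms vanish. You instead combine the two logarithms into \( \ln\plr{1 + \eps^2\tan\plr{\tfrac{s+t}{2}}^2} \) and apply dominated convergence, with the uniform (in \( \eps \le 1 \)) majorant \( -2\ln\abs{\cos\plr{\tfrac{s+t}{2}}} \), which is integrable because its only singularity on the torus is the single logarithmic blow-up at \( t \equiv \pi - s \). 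Your argument is cleaner and shorter for this particular statement, since \( \oo(1) \) is all that is claimed; what the paper's inner/outer machinery buys is a quantitative rate (roughly \( \OO(\sqrt{\eps}\,\abs{\ln\eps}) \)) and, perhaps more importantly, methodological uniformity with the matched-asymptotics proofs of \cref{lem:asy_KL}, \cref{lem:asy_KPsi}, and \cref{lem:asy_Kln}, so the same bookkeeping scheme is reused across the appendix. One small remark: at \( t = \pi - s \) the second integrand equals \( -\infty \) while the first is finite, so the pointwise difference is \( +\infty \) there; your convention of redefining it on this null set is fine, but it is worth being explicit that you are working with the difference of two absolutely convergent integrals, so the combined-logarithm identity holds a.e.\ and the redefinition does not affect either side.
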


\begin{proof}
    Using the change of variable \( t = \pi-s+x \), we then split the integral
    \[
        \int_{-\pi}^\pi f(\pi-s+x)\, \ln\plr{ 4 \sin\plr{\tfrac{x}{2}}^2
            + 4 \eps^2 \cos\plr{\tfrac{x}{2}}^2 } \di{x} =
        \mathcal{F}^\mathrm{inn} + \mathcal{F}^\mathrm{out}
    \]
    with
    \begin{align*}
        \mathcal{F}^\mathrm{inn}
         & = \int_{-\delta}^\delta f(\pi-s+x)\,
        \ln\plr{ 4 \sin\plr{\tfrac{x}{2}}^2
        + 4 \eps^2  \cos\plr{\tfrac{x}{2}}^2 }\di{x}, \\
        \mathcal{F}^\mathrm{out}
         & = \int_{I_\delta} f(\pi-s+x)\,
        \ln\plr{ 4 \sin\plr{\tfrac{x}{2}}^2
            + 4 \eps^2 \cos\plr{\tfrac{x}{2}}^2 } \di{x}
    \end{align*}
    where \( I_\delta = [-\pi, \pi] \setminus (-\delta, \delta) \), and \( \delta \) is a parameter such that \( \delta \to 0^+ \) with \( \eps = \oo(\delta) \).
    We now use classic techniques from matched asymptotic expansions (see for instance \cref{ssec:KL_asy}).
    For the inner expansion, we remark that, as \( \eps \to 0^+ \), we have
    \[
        \abs{ \ln\plr{ 4 \sin\plr{\tfrac{x}{2}}^2
                + 4 \eps^2  \cos\plr{\tfrac{x}{2}}^2 } } \le 2\abs{\ln(2\eps)}
    \]
    which gives \( \mathcal{F}^\mathrm{inn} = \OO(\delta \abs{\ln(\eps)}) \).
    Then for the outer expansion, setting \( x = s + t - \pi \) and using the fact that
    \[
        \ln\plr{ 4 \sin\plr{\tfrac{x}{2}}^2
            + 4 \eps^2 \cos\plr{\tfrac{x}{2}}^2 } = \ln\plr{ 4
            \sin\plr{\tfrac{x}{2}}^2 } + \OO\plr{ \eps^2
            \tan\plr{\tfrac{x}{2}}^{-2} }
    \]
    we obtain
    \begin{align*}
        \mathcal{F}^\mathrm{out}
         & = \int_{I_\delta} f(\pi-s+x)\, \ln\plr{ 4
            \sin\plr{\tfrac{x}{2}}^2 } \di{x} +
        \OO\plr{\frac{\eps^2}{\delta^3}}
        \\
         & = \int_{-\pi}^\pi f(\pi-s+x)\, \ln\plr{ 4
            \sin\plr{\tfrac{x}{2}}^2 } \di{x}
        + \OO\plr{\delta \abs{\ln{\delta}}} +
        \OO\plr{\frac{\eps^2}{\delta^3}}
    \end{align*}
    One concludes by remarking that we can choose \( \delta \to 0 \) (for example \( \delta = \sqrt{\eps} \)) such that \( \OO\plr{\delta \abs{\ln(\eps)}} + \OO\plr{\delta \abs{\ln{\delta}}} + \OO\plr{\frac{\eps^2}{\delta^3}} = \oo(1) \).
\end{proof}

\bibliographystyle{siamplain}
\bibliography{references}

\begin{thebibliography}{10}

\bibitem{ludvig2016}
{\sc L.~{af Klinteberg} and A.-K. Tornberg}, {\em A fast integral equation
  method for solid particles in viscous flow using quadrature by expansion}, J.
  Comput. Phys., 326 (2016), pp.~420--445,
  \url{https://doi.org/10.1016/j.jcp.2016.09.006}.

\bibitem{ando2020spectral}
{\sc K.~Ando, H.~Kang, and Y.~Miyanishi}, {\em Spectral structure of the
  neumann--poincar{\'e} operator on thin domains in two dimensions}, arXiv
  preprint arXiv:2006.14377,  (2020).

\bibitem{anker2008biosensing}
{\sc J.~N. Anker, W.~P. Hall, O.~Lyandres, N.~C. Shah, J.~Zhao, and R.~P.
  Van~Duyne}, {\em Biosensing with plasmonic nanosensors}, Nat. Mater.,
  (2008), pp.~442--453, \url{https://doi.org/10.1038/nmat2162}.

\bibitem{avolio2019elongated}
{\sc M.~Avolio, H.~Gavil\'an, E.~Mazario, F.~Brero, P.~Arosio, A.~Lascialfari,
  and M.~Puerto~Morales}, {\em Elongated magnetic nanoparticles with
  high-aspect ratio: a nuclear relaxation and specific absorption rate
  investigation}, Phys. Chem. Chem. Phys., 21 (2019), pp.~18741--18752,
  \url{https://doi.org/10.1039/C9CP03441B}.

\bibitem{bagge2020highly}
{\sc J.~Bagge and A.-K. Tornberg}, {\em Highly accurate special quadrature
  methods for stokesian particle suspensions in confined geometries}, arXiv
  preprint arXiv:2005.12614,  (2020).

\bibitem{bauer2004biological}
{\sc L.~A. Bauer, N.~S. Birenbaum, and G.~J. Meyer}, {\em Biological
  applications of high aspect ratio nanoparticles}, J. Mater. Chem., 14 (2004),
  pp.~517--526, \url{https://doi.org/10.1039/B312655B}.

\bibitem{Beale2016}
{\sc J.~T. Beale, W.~Ying, and J.~R. Wilson}, {\em A simple method for
  computing singular or nearly singular integrals on closed surfaces},
  Communications in Computational Physics, 20 (2016), pp.~733--753,
  \url{https://doi.org/10.4208/cicp.030815.240216a}.

\bibitem{Carvalho2018}
{\sc C.~Carvalho, S.~Khatri, and A.~D. Kim}, {\em Asymptotic analysis for close
  evaluation of layer potentials}, Journal of Computational Physics, 355
  (2018), pp.~327--341, \url{https://doi.org/10.1016/j.jcp.2017.11.015}.

\bibitem{Carvalho2020}
{\sc C.~Carvalho, S.~Khatri, and A.~D. Kim}, {\em Asymptotic approximations for
  the close evaluation of double-layer potentials}, SIAM Journal on Scientific
  Computing, 42 (2020), pp.~A504--A533,
  \url{https://doi.org/10.1137/18M1218698}.

\bibitem{Cheng1997}
{\sc H.~Cheng and L.~Greengard}, {\em On the numerical evaluation of
  electrostatic fields in dense random dispersions of cylinders}, Journal of
  Computational Physics, 136 (1997), pp.~629--639,
  \url{https://doi.org/10.1006/jcph.1997.5787}.

\bibitem{demirer2019}
{\sc G.~S. Demirer, H.~Zhang, J.~L. Matos, G.~N. S., and F.~J. Cunningham},
  {\em High aspect ratio nanomaterials enable delivery of functional genetic
  material without dna integration in mature plants}, Nat. Nanotechnol., 14
  (2019), pp.~456--464, \url{https://doi.org/10.1038/s41565-019-0382-5}.

\bibitem{deng2021mathematical}
{\sc Y.~Deng, H.~Liu, and G.-H. Zheng}, {\em Mathematical analysis of plasmon
  resonances for curved nanorods}, Journal de Math{\'e}matiques Pures et
  Appliqu{\'e}es,  (2021).

\bibitem{Diethelm1996}
{\sc K.~Diethelm}, {\em Peano kernels and bounds for the error constants of
  {G}aussian and related quadrature rules for {C}auchy principal value
  integrals}, Numerische Mathematik, 73 (1996), pp.~53--63,
  \url{https://doi.org/10.1007/s002110050183}.

\bibitem{Geer1978}
{\sc J.~F. Geer}, {\em The scattering of a scalar wave by a slender body of
  revolution}, SIAM Journal on Applied Mathematics, 34 (1978), pp.~348--370.

\bibitem{Geer1995}
{\sc J.~F. Geer}, {\em Rational trigonometric approximations using {F}ourier
  series partial sums}, Journal of Scientific Computing, 10 (1995),
  pp.~325--356, \url{https://doi.org/10.1007/BF02091779}.

\bibitem{Guenther1996}
{\sc R.~B. Guenther and J.~W. Lee}, {\em Partial differential equations of
  mathematical physics and integral equations}, Dover books on mathematics,
  Dover, New York, NY, 1996.

\bibitem{Helsing2008}
{\sc J.~Helsing and R.~Ojala}, {\em On the evaluation of layer potentials close
  to their sources}, Journal of Computational Physics, 227 (2008),
  pp.~2899--2921, \url{https://doi.org/10.1016/j.jcp.2007.11.024}.

\bibitem{Hinch1991}
{\sc E.~J. Hinch}, {\em {P}erturbation {M}ethods}, Cambridge University Press,
  Cambridge University, 1991, \url{https://doi.org/10.1017/CBO9781139172189}.

\bibitem{Kress1990}
{\sc R.~Kress}, {\em A {N}ystr{\"o}m method for boundary integral equations in
  domains with corners}, Numerische Mathematik, 58 (1990), pp.~145--161,
  \url{https://doi.org/10.1007/BF01385616}.

\bibitem{Kress1991}
{\sc R.~Kress}, {\em Boundary integral equations in time-harmonic acoustic
  scattering}, Mathematical and Computer Modelling, 15 (1991), pp.~229--243,
  \url{https://doi.org/10.1016/0895-7177(91)90068-I}.

\bibitem{Kress2014}
{\sc R.~Kress}, {\em Linear Integral Equations}, Applied Mathematical Sciences,
  Springer-Verlag, New York, 3~ed., 2014,
  \url{https://doi.org/10.1007/978-1-4614-9593-2}.

\bibitem{Lamp1985}
{\sc U.~Lamp, K.-T. Schleicher, and W.~L. Wendland}, {\em The fast {F}ourier
  transform and the numerical solution of one-dimensional boundary integral
  equations}, Numerische Mathematik, 47 (1985), pp.~15--38,
  \url{https://doi.org/10.1007/BF01389873}.

\bibitem{Lee1995}
{\sc S.~S. Lee and R.~A. Westmann}, {\em Application of high-order quadrature
  rules to time-domain boundary element analysis of viscoelasticity},
  International Journal for Numerical Methods in Engineering, 38 (1995),
  pp.~607--629, \url{https://doi.org/10.1002/nme.1620380407}.

\bibitem{McLean2000}
{\sc W.~C.~H. McLean}, {\em Strongly elliptic systems and boundary integral
  equations}, Cambridge University Press, Cambridge; New York, 2000.

\bibitem{PaperCode2021}
{\sc Z.~Moitier}, {\em {S}cattering\_{BIE}\_{QPAX}}, 2021,
  \url{https://doi.org/10.5281/zenodo.4692601}.

\bibitem{Nist}
{\sc F.~W.~J. Olver, D.~W. Lozier, R.~F. Boisvert, and C.~W. Clark}, eds., {\em
  N{IST} handbook of mathematical functions}, U.S. Department of Commerce,
  National Institute of Standards and Technology, Washington, DC; Cambridge
  University Press, Cambridge, 2010, \url{https://dlmf.nist.gov/}.

\bibitem{paivanranta2011}
{\sc B.~P\"aiv\"anranta, H.~Merbold, R.~Giannini, L.~B\"uchi, S.~Gorelick,
  C.~David, J.~F. L\"offler, T.~Feurer, and Y.~Ekinci}, {\em High aspect ratio
  plasmonic nanostructures for sensing applications}, ACS Nano, 5 (2011),
  pp.~6374--6382, \url{https://doi.org/10.1021/nn201529x}.

\bibitem{ruiz2019slender}
{\sc M.~Ruiz and O.~Schnitzer}, {\em Slender-body theory for plasmonic
  resonance}, Proceedings of the Royal Society A, 475 (2019), p.~20190294.

\bibitem{ruiz2021plasmonic}
{\sc M.~Ruiz and O.~Schnitzer}, {\em Plasmonic resonances of slender
  nanometallic rings}, arXiv preprint arXiv:2107.01716,  (2021).

\bibitem{Sladek2000}
{\sc V.~Sladek, J.~Sladek, S.~N. Atluri, and R.~Van~Keer}, {\em Numerical
  integration of singularities in meshless implementation of local boundary
  integral equations}, Computational Mechanics, 25 (2000), pp.~394--403,
  \url{https://doi.org/10.1007/s004660050486}.

\bibitem{tornberg2020accurate}
{\sc A.-K. Tornberg}, {\em Accurate evaluation of integrals in slender-body
  formulations for fibers in viscous flow}, arXiv preprint arXiv:2012.12585,
  (2020).

\bibitem{Yang2012}
{\sc T.~Yang, Y.-F. Li, M.~Mahdavi, R.~Jin, and Z.-H. Zhou}, {\em Nystr{\"o}m
  method vs random {F}ourier features: {A} theoretical and empirical
  comparison}, Advances in Neural Information Processing Systems, 1 (2012),
  pp.~476--484.

\end{thebibliography}

\end{document}